\newtheorem{thm}{Theorem}[section]
\newtheorem{lem}[thm]{Lemma}
\newtheorem{prop}[thm]{Proposition}
\theoremstyle{definition}
\newtheorem{rem}[thm]{Remark}
\theoremstyle{remark}
\newcommand{\ds}{\displaystyle}
\newcommand{\R}{\mathbb{R}}
\newcommand{\N}{\mathbb{N}}
\newcommand{\de}{\partial}
\newcommand{\eps}{\varepsilon}
\patchcmd{\abstract}{\scshape\abstractname}{\textbf{\abstractname}}{}{}
\def\@makefnmark{} %note a di pagina senza numero 2
\title{Saturation phenomena for some classes of nonlinear nonlocal eigenvalue problems}
\author[,a]{Francesco Della Pietra\thanks{f.dellapietra@unina.it}}
\author[,b]{Gianpaolo Piscitelli\thanks{gianpaolo.piscitelli@unicas.it}}
\affil[a]{\scriptsize Dipartimento di Matematica e Applicazioni ``R. Caccioppoli'', Universit\`a degli studi di Napoli Federico II \\ Complesso Universitario Monte S. Angelo, Via Cintia 45, 80126 Napoli, Italy.}
\affil[b]{\scriptsize Dipartimento di Ingegneria Elettrica e dell'Informazione, Universit\`a degli Studi di Cassino e del Lazio Meridionale\\ Via G. Di Biasio n. 43, 03043 Cassino (FR), Italy.}
\begin{document}
%\doublespacing
\maketitle
%\tableofcontents
\begin{abstract}
Let us consider the following minimum problem 
\[
\lambda_\alpha(p,r)=\min_{\substack{u\in W_{0}^{1,p}(-1,1)\\u\not\equiv0}}\dfrac{\ds\int_{-1}^{1}|u'|^{p}dx+\alpha\left|\int_{-1}^{1}|u|^{r-1}u\, dx\right|^{\frac pr}}{\ds\int_{-1}^{1}|u|^{p}dx},
\]
where $\alpha\in\R$, $p\ge 2$ and $\frac p2 \le r \le p$. We show that there exists a critical value $\alpha_C=\alpha_C (p,r)$ such that the minimizers have constant sign up to $\alpha=\alpha_{C}$ and then they are odd when $\alpha>\alpha_{C}$.\\
\noindent MSC: 26D10, 34B09, 35P30, 49R05.
\end{abstract}

\section{Introduction}
In this paper we consider the problem:
\begin{equation}\label{operat}
\lambda_\alpha(p,r)=\inf \left\{ \mathcal Q_\alpha[u],\; u\in W_0^{1,p}(-1,1),\,u\not\equiv 0 \right\},
\end{equation}
where 
\begin{equation}
\label{Qualpha}
\mathcal Q_{\alpha}[u]:=\dfrac{\ds\int_{-1}^{1}|u'|^{p}dx+\alpha\left|\int_{-1}^{1}|u|^{r-1}u\, dx\right|^{\frac pr}}{\ds\int_{-1}^{1}|u|^{p}dx},
\end{equation}
with $\alpha\in\R$ and $1\le \frac{p}{2}\le r \le p$.

The problem we deal with has been treated by many authors both in the one dimensional and in the $n$-dimensional case. For example, reaction-diffusion equations describing chemical processes (see \cite{F}, \cite{S}) or Brownian motion with random jumps (see \cite{P}).

The minimization problem \eqref{operat} leads, in general, to a nonlinear eigenvalue problem with a nonlocal term. 
%In our case, the Euler-Lagrange equation associated to problem \eqref{operat} contains an integral term computed over all the interval $(-1,1)$. 
Supposing without loss of generality that $y$ is a minimizer with $\int_{-1}^1 |y|^{r-1}y\  dx\ge 0$, we have
\begin{equation*}
\left\{
\begin{array}{ll}
-(|y'|^{p-2}y')' + \alpha\left(\ds\int_{-1}^1 |y|^{r-1}y\  dx\right)^{\frac{p}{r}-1}|y|^{r-1}=\lambda_\alpha(p,r)\, |y|^{p-2}y\quad\text{in}\ ]-1,1[\\[.4cm]
y(-1)=y(1)=0
\end{array}
\right.
\end{equation*}
(see Section 2 for its precise statement). 

The value $\lambda_\alpha(p,r)$ is the optimal constant in the Sobolev-Poincar\'e-Wirtinger inequality
\begin{equation*}
\lambda_\alpha(p,r)\int_{-1}^{1}|u|^{p}dx\le{\ds\int_{-1}^{1}|u'|^{p}dx+\alpha\left|\int_{-1}^{1}|u|^{r-1}u\, dx\right|^{\frac pr}}{\ds},
\end{equation*}
which holds for any $u\in W_0^{1,p}(-1,1)$. Our aim is to study symmetry properties of the minimizers of \eqref{operat} and, as a consequence, to give some informations on $\lambda_\alpha(p,r)$. In the local case ($\alpha=0$), this inequality reduces to the classical one-dimensional Poincar\'e inequality; in particular, 
\[
\lambda_{0}(p,r)=\left(\frac{\pi_p}{2}\right)^p
\]
for any $p$ and $r$, where
\[
\pi_p=2\int_0^{+\infty}\frac{1}{1+\frac{1}{p-1}s^p}ds=2\pi\frac{(p-1)^{\frac1p}}{p\sin\frac{\pi}{p}}.
\]

Our problem is related to the study of the minimization of \eqref{Qualpha} under the assumption $\int_{-1}^{1}|u|^{r-1}u=0$ (that is the limit case ``$\alpha=\infty$''). This was studied by several authors (see for example \cite{DGS,E,BKN,BK,N1,CD,GN}), considering various cases of the exponents $p,q,r$. A very general case was studied recently in \cite{GGR}, where the authors studied the symmetry of the minimizers of
\begin{equation}
\label{twist}
\tilde \Lambda(p,q,r):=\min \left\{ \dfrac{\ds\int_{-1}^{1}|u'|^{p}dx}{\ds\left(\int_{-1}^{1}|u|^{q}dx\right)^\frac pq},\; u\in W_0^{1,p}(-1,1),\,\int_{-1}^{1}|u|^{r-1}u\,dx=0,\,u\not\equiv 0 \right\},
\end{equation}
and showed that, when $p,q>1$, $r>0$ with $q\le (2r+1)p$, these minimizers are odd functions. In particular, if $1<p=q<\infty$, they showed that
\begin{equation*}
%\label{twist2}
\Lambda(p,r):=\tilde \Lambda(p,p,r)=\pi_p^p
\end{equation*}
for any $r$. In \cite{DP} we studied the problem \eqref{operat} in the case $p=2$. In this paper we consider the more general case $p\ge 2$. Recently, this problem was studied also in the multidimensional case, when $\alpha \in \R$ and $p=q=2$ in \cite{BFNT} ($r=1$) and in \cite{D} ($r=2$). 
%Here, the linearity of the equations associated to the minimum problem plays a key role some properties that do not work when we assume the nonlinearity in the nonlocal term ($r\not\in\{1,2\}$). 
For related problems we refer the reader to
%for problems with both assigned average of the unknown function and boundary conditions have been studied 
\cite{FH,Narxiv,BDNT,BCGM,CHP,KN,Pi,BCGM}.

In the present paper, we show that the nonlocal term affects the minimizer of problem \eqref{operat} in the sense that it has constant sign up to a critical value of $\alpha$ and, for $\alpha$ larger than the critical value, it has to change sign, and a saturation effect occurs. More precisely, the first main result we obtain is the following.
\begin{thm}
\label{mainthm1}
Let $p\geq2$, $\frac p2\le r \le p$. Then there exists a positive number $\alpha_C=\alpha_C(p,r)$ such that:
\begin{enumerate}
\item if $\alpha<\alpha_{C}$, then
\[
\lambda_\alpha(p,r)<\pi_p^{p},
\]
and any minimizer $y$ of $\lambda_\alpha(p,r)$ has constant sign in $]-1,1[$.
\item If $\alpha\ge\alpha_{C}$, then
\[
\lambda_\alpha(p,r)= \pi_p^{p}.
\]
Moreover, if $\alpha>\alpha_{C}$, the function {$y(x)=\sin_p\pi_p x$, $x\in[-1,1]$, is the unique minimizer, up to a multiplicative constant, of $\lambda_\alpha(p,r)$. Hence it is odd, $\int_{-1}^{1} |y(x)|^{r-1}y(x)\,dx=0$, and $\overline x=0$ is the only point in $]-1,1[$ such that $y(\overline x)=0$.
}
\end{enumerate}
\end{thm}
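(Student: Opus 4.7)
The plan is to (i) use oddness of $\sin_p(\pi_p x)$ to get the uniform upper bound $\lambda_\alpha\le \pi_p^p$ and set up the threshold $\alpha_C$; (ii) handle $\alpha>\alpha_C$ by a direct comparison across the threshold; (iii) handle $\alpha<\alpha_C$ by an Euler--Lagrange argument on the nodal components, the $p>r$ subcase being the principal obstacle.

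First, since $y_0(x)=\sin_p(\pi_p x)$ is odd on $[-1,1]$, one has $\int_{-1}^1|y_0|^{r-1}y_0\,dx=0$ and hence $\mathcal Q_\alpha[y_0]=\pi_p^p$ for every $\alpha$, giving $\lambda_\alpha(p,r)\le\pi_p^p$ uniformly. Moreover $\alpha\mapsto\mathcal Q_\alpha[u]$ is affine and non-decreasing in $\alpha$ for each fixed $u$, so $\lambda_\alpha$ is concave, non-decreasing, and hence continuous on $[0,\infty)$. Since $\lambda_0=(\pi_p/2)^p<\pi_p^p$, one may set
\[
\alpha_C:=\sup\{\alpha\ge 0 : \lambda_\alpha(p,r)<\pi_p^p\},
\]
which is strictly positive by continuity, and monotonicity gives the dichotomy $\lambda_\alpha<\pi_p^p$ for $\alpha<\alpha_C$, $\lambda_\alpha=\pi_p^p$ for $\alpha\ge\alpha_C$. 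Finiteness of $\alpha_C$ follows a posteriori, since the constant-sign infimum $\inf\{\mathcal Q_\alpha[u]:u\ge 0,u\not\equiv 0\}$ blows up as $\alpha\to\infty$ by a standard concentration-vs-regularity balance, so the overall minimizer must eventually be sign-changing.

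For $\alpha>\alpha_C$, let $y$ be a minimizer, so $\mathcal Q_\alpha[y]=\pi_p^p$, and set $T:=\int_{-1}^1|y|^{r-1}y\,dx$. If $T\ne 0$, I would pick $\alpha'\in(\alpha_C,\alpha)$ and use $y$ as a test function for $\lambda_{\alpha'}$:
\[
\lambda_{\alpha'}\le\mathcal Q_{\alpha'}[y]=\pi_p^p-(\alpha-\alpha')\frac{|T|^{p/r}}{\int|y|^p\,dx}<\pi_p^p,
\]
contradicting $\lambda_{\alpha'}=\pi_p^p$. Hence $T=0$, forcing $\int|y'|^p/\int|y|^p=\pi_p^p$ and making $y$ admissible in problem \eqref{twist} with $q=p$. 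The characterization of minimizers of \eqref{twist} from \cite{GGR} identifies $y$, up to a multiplicative constant, with $\sin_p(\pi_p x)$; this gives oddness, uniqueness, and the single interior zero at $\bar x=0$.

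For $\alpha<\alpha_C$, let $y$ be a minimizer and assume for contradiction that $y$ changes sign. If $T=0$, then $y$ is admissible in \eqref{twist}, so $\int|y'|^p/\int|y|^p\ge\pi_p^p>\lambda_\alpha$, a contradiction. The remaining subcase $T\ne 0$, WLOG $T>0$, is the crux. Writing $y=y_+-y_-$, $\mu:=\alpha T^{p/r-1}>0$, and $C_\pm:=\int y_\pm^r\,dx$, I would test the Euler--Lagrange equation of Section~2 against $y_\pm$ on their nodal supports to obtain
\[
\int|y_+'|^p\,dx=\lambda_\alpha\int y_+^p\,dx-\mu C_+,\qquad \int|y_-'|^p\,dx=\lambda_\alpha\int y_-^p\,dx+\mu C_-.
\]
Then I would form the rescaled competitor $\tilde y:=y_+-(C_+/C_-)^{1/r}y_-$, which by construction satisfies $\int|\tilde y|^{r-1}\tilde y\,dx=0$. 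Inserting the identities above into the sharp bound $\int|\tilde y'|^p/\int|\tilde y|^p\ge\pi_p^p$ from \eqref{twist} produces, in the case $p=r$, a clean cancellation yielding $\lambda_\alpha\ge\pi_p^p$, the desired contradiction. For $p>r$, the same substitution leaves a strictly positive remainder of the form $\mu C_+\bigl((C_+/C_-)^{p/r-1}-1\bigr)$ coming from the strict convexity of $t\mapsto t^{p/r}$; absorbing this remainder is the main technical obstacle, and I would close it by optimizing over a richer family of admissible competitors (for instance, coupling the rescaling with localized perturbations supported away from the interior zeros of $y$) so as to extract an additional defect that still forces $\lambda_\alpha\ge\pi_p^p$.
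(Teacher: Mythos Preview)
Your setup is sound and the part of the argument covering $\alpha>\alpha_C$ matches the paper's own proof almost verbatim: both test a hypothetical minimizer with $T\ne 0$ at a slightly smaller parameter to contradict $\lambda_{\alpha'}=\pi_p^p$. The upper bound $\lambda_\alpha\le\pi_p^p$ via $\sin_p(\pi_p x)$ and the definition of $\alpha_C$ through monotonicity and continuity are also the same in spirit (the paper uses Lipschitz continuity from Proposition~\ref{propr}(c) rather than concavity, but either works). Finiteness of $\alpha_C$ is likewise proved in the paper by the compactness argument you sketch.

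The genuine gap is in the case $\alpha<\alpha_C$ with a sign-changing minimizer and $T\ne 0$, precisely where you flag it. Your competitor $\tilde y=y_+-(C_+/C_-)^{1/r}y_-$ produces, after substitution of the Euler--Lagrange identities, the inequality
\[
\pi_p^p\le \lambda_\alpha+\frac{\mu C_+\bigl((C_+/C_-)^{p/r-1}-1\bigr)}{\int|\tilde y|^p},
\]
and for $p>r$ with $C_+>C_-$ the remainder is strictly positive, so no contradiction follows. The rescaling factor is forced by the zero-average constraint, so there is no freedom left in this family; the proposal to ``optimize over a richer family of admissible competitors'' with localized perturbations is not a proof, and it is not clear that any variational competitor argument can manufacture the missing negative term.

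The paper takes a completely different route here, and this is in fact the technical heart of the article. Proposition~\ref{cambiosegno0} shows, via a first-integral analysis of the Euler--Lagrange ODE, that any sign-changing minimizer has a unique interior zero, a unique maximum $1$ and minimum $-\bar m\in[-1,0[$, and that $\lambda_\alpha=(p-1)H(\bar m,p,r)^p$ for an explicit integral $H$. The inequality $H(m,p,r)\ge \pi_p/(p-1)^{1/p}$ (Lemma~\ref{propminimo}) is then obtained by two monotonicity results: $H$ is strictly increasing in $r$ for fixed $m\in]0,1[$ (Lemma~\ref{prop-monotonia-r}, a fairly delicate pointwise computation), and $H(m,p,p/2)$ is constant in $m$ (Lemma~\ref{Hdecr}, proved by a clever change of variables in the spirit of \cite{GGR}). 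Together these give $\lambda_\alpha\ge\pi_p^p$ whenever a minimizer changes sign, which is exactly the statement your competitor method fails to reach for $r<p$. Your argument does recover the endpoint $r=p$ cleanly, but the full range $\frac p2\le r<p$ genuinely requires the ODE machinery of Section~\ref{section_prop_min}.
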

Moreover we analyze the behaviour of the minimizers associated to the critical values.
\begin{thm}
\label{mainthm2}
Let $p\geq 2$, $\frac p2\le r \le p$,
%\item If both $q=1$, then $\alpha_{C}(p,1)=\frac{\pi^{2}}{2}$. Moreover, if $\alpha=\alpha_{C}$, there exists a positive minimizer of $\lambda(\alpha_C,p,1)$, and for any $\overline x\in]-1,1[$ there exists a minimizer $y$ of $\lambda(\alpha_C,p,1)$ which changes sign in $\overline x$, non-symmetric and with $\int_{-1}^{1}y(x)\,dx\ne 0$ when $\overline x\ne 0$.
if $\alpha=\alpha_C(p,r)$, then $\lambda_{\alpha_C}(p,r)$ admits both a positive minimizer and the minimizer $y(x)=\sin_p\pi_p x$, up to a multiplicative constant. Moreover, if $r>\frac p2$ any minimizer has constant sign or it is odd. Furthermore, if $r=p$, then $\alpha_{C}(p,p)=\frac{2^p-1}{2^p}\pi_p^{p}$.
\end{thm}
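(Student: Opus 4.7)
The plan is to address the three parts of the statement in turn. First, I would obtain a positive minimizer at $\alpha=\alpha_C$ via a compactness argument: pick $\alpha_n\uparrow\alpha_C$; by Theorem~\ref{mainthm1} each $\alpha_n$ admits a positive minimizer $y_n$, normalized so that $\|y_n\|_{L^p}=1$. The bound $\mathcal{Q}_{\alpha_n}[y_n]=\lambda_{\alpha_n}(p,r)<\pi_p^p$ keeps $\{y_n\}$ bounded in $W_0^{1,p}(-1,1)$, so up to a subsequence $y_n\rightharpoonup y$ in $W_0^{1,p}$ and uniformly on $[-1,1]$ with $y\ge0$ and $\|y\|_{L^p}=1$. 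Weak lower semicontinuity of $\int|u'|^p$, continuity of the nonlocal term under uniform convergence, and continuity of $\alpha\mapsto\lambda_\alpha(p,r)$ (which is concave, being the infimum of functions affine in $\alpha$) give $\mathcal{Q}_{\alpha_C}[y]\le\lambda_{\alpha_C}(p,r)$; the strong minimum principle applied to the Euler--Lagrange equation satisfied by the nonnegative $y$ then yields $y>0$ on $(-1,1)$. That $\sin_p\pi_p x$ is also a minimizer is a direct check: oddness makes $\int_{-1}^1|\sin_p\pi_p x|^{r-1}\sin_p\pi_p x\,dx=0$, so the nonlocal term vanishes, and on each of $(-1,0)$ and $(0,1)$ the function $\sin_p\pi_p x$ is, up to sign, the first positive Dirichlet eigenfunction of the $p$-Laplacian with eigenvalue $\pi_p^p$, giving $\mathcal{Q}_{\alpha_C}[\sin_p\pi_p x]=\pi_p^p=\lambda_{\alpha_C}(p,r)$.

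For the second assertion let $r>p/2$ and suppose $y$ is a sign-changing minimizer at $\alpha_C$. A variational argument on each nodal component (one can replace $y$ on a nodal interval by a rescaled first positive $p$-eigenfunction preserving the $L^p$ and $r$-signed masses on that side, which would strictly decrease $\int|y'|^p$ otherwise) combined with an elementary length-count on nodal intervals (a second interior zero would force a component of length $<1$, hence a Rayleigh quotient strictly exceeding $\pi_p^p$ there and, by the convex combination structure of the global Rayleigh quotient, preventing equality with $\pi_p^p$) shows that $y$ has exactly one interior zero $x_0\in(-1,1)$ and, on each nodal interval, is a scalar multiple of the first positive Dirichlet eigenfunction of the $p$-Laplacian there. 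Setting $s=x_0+1$, $t=1-x_0$, and denoting by $a,|b|>0$ the amplitudes on the two sides, the conditions $\int|y|^{r-1}y=0$ and $\int|y'|^p=\pi_p^p\int|y|^p$ become
\[
a^r s=|b|^r t,\qquad \frac{a^p(1-s^p)}{s^{p-1}}+\frac{|b|^p(1-t^p)}{t^{p-1}}=0.
\]

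Eliminating $|b|^p=a^p(s/t)^{p/r}$ reduces the system to the single-variable equation $H(s):=G(s)+G(2-s)=0$ on $(0,2)$, with $G(x)=x^{1-p-p/r}(1-x^p)$. The main obstacle is to show that $s=1$ is the unique root of $H$ in $(0,2)$ when $r>p/2$. The function $H$ is symmetric about $s=1$, with $H(1)=H'(1)=0$ and
\[
H''(1)=2p\bigl(p(r+2)/r-1\bigr)>0,
\]
so $s=1$ is a strict local minimum; global uniqueness then follows from a monotonicity analysis of $G'(s)-G'(2-s)$ on $(0,1)$ in which the hypothesis $r>p/2$ is precisely what rules out further zeros of $H$. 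This forces $s=t=1$, i.e.\ $x_0=0$ and $|a|=|b|$, hence $y$ is odd.

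Finally, for $r=p$ the value of $\alpha_C(p,p)$ is obtained in closed form. If $u\in W_0^{1,p}(-1,1)$ is of constant sign, then $|\int|u|^{p-1}u|=\int|u|^p$ and
\[
\mathcal{Q}_\alpha[u]=\frac{\int|u'|^p}{\int|u|^p}+\alpha,
\]
whose infimum over positive $u$ equals $(\pi_p/2)^p+\alpha$ (the Poincar\'e constant on $(-1,1)$ shifted by $\alpha$). The odd candidate $\sin_p\pi_p x$ always yields $\mathcal{Q}_\alpha=\pi_p^p$. The critical value is the one at which the two infima meet, giving $\alpha_C(p,p)=\pi_p^p-(\pi_p/2)^p=\frac{2^p-1}{2^p}\pi_p^p$.
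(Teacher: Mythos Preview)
Your first and third parts are essentially the paper's approach: approximating sequences from below yield the positive minimizer, and for $r=p$ the Euler--Lagrange equation of the positive minimizer (or, equivalently, your direct computation of the infimum over positive functions) pins down $\alpha_C(p,p)$.

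The second part, however, has a genuine gap. Your ``variational replacement'' does \emph{not} show that a sign-changing minimizer $y$ is a first Dirichlet $p$-Laplace eigenfunction on each nodal interval. On a fixed interval the first eigenfunction is a one-parameter family (amplitude only), so you cannot in general match \emph{both} $\int|y|^p$ and $\int|y|^r$ on that interval by such a replacement. If you instead minimize $\int|\phi'|^p$ under the two constraints $\int|\phi|^p$ and $\int\phi^r$ fixed, the Euler--Lagrange equation picks up two multipliers and reads $-(|\phi'|^{p-2}\phi')'=\mu_1\phi^{p-1}+\mu_2\phi^{r-1}$, which is precisely the original nonlocal equation restricted to the nodal piece, not the pure $p$-Laplace eigenvalue equation. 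In other words, your premise ``$y$ is a first eigenfunction on each piece'' already presupposes $\gamma=0$, i.e.\ $\int|y|^{r-1}y=0$, which is exactly the conclusion you are trying to reach. The system you write down and the analysis of $H(s)=G(s)+G(2-s)$ therefore start from an unproved hypothesis; and once $\int|y|^{r-1}y=0$ is known, the ODE $-(|y'|^{p-2}y')'=\pi_p^p|y|^{p-2}y$ directly forces $y=C\sin_p(\pi_p x)$ without any further length-balancing argument.

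The paper closes this gap by a different route (Proposition~\ref{cambiosegno0}, Lemmas~\ref{prop-monotonia-r}--\ref{propminimo}, Proposition~\ref{cambiosegno}). From the first integral of the Euler--Lagrange equation one extracts the relation $\lambda_\alpha(p,r)=(p-1)H(\bar m,p,r)^p$, where $\bar m=-\min y/\max y\in(0,1]$ and $H$ is an explicit integral depending on $\bar m$. The key analytic fact, proved via a monotonicity-in-$r$ argument together with an exact identity at $r=p/2$, is that $H(m,p,r)\ge\pi_p/(p-1)^{1/p}$ with equality only at $m=1$ when $r>p/2$. Since $\lambda_{\alpha_C}=\pi_p^p$ forces equality, one gets $\bar m=1$, hence $R(\bar m,p,r)=0$, hence $\gamma=0$, hence $\int|y|^{r-1}y=0$. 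This is the step your argument is missing.
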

\begin{rem}
When the interval is $]a,b[$ instead of $]-1,1[$, we have
\[
\lambda_{\alpha}(p,r;]a,b[)= \left(\frac{2}{b-a}\right)^{p} \cdot 
\lambda_{\tilde \alpha}\left(p,r\right),
\]
\end{rem}
with $\tilde \alpha=\left(\frac{b-a}{2}\right)^{\frac pr +p-1}\alpha$.
The outline of the paper follows. In Section 2 we show some properties of $\lambda_\alpha(p,r)$, while in Section 3 we study the behavior of the changing-sign minimizers. Finally, in Section 4 we give the proof of the main results.

\section{Preliminaries}
\subsection{The p-circular functions} Let $p>1$ and let us consider the function $F_p:[0, (p-1)^\frac 1p]$ defined as
\begin{equation*}
F_p(x)=\ds\int_0^x\frac{dt}{\ds\left[1-{t^p}/{(p-1)}\right]^\frac 1p}.
\end{equation*}
Denote by $z(s)$ the inverse function of $F$ which is defined on the interval $\left[0,\frac{\pi_p}{2}\right]$, where
\[
\pi_p=2\ds\int_{ 0}^{(p-1)^\frac 1p}\frac{dt}{\ds\left[1-{t^p}/{(p-1})\right]^\frac 1p}=2(p-1)^\frac1p \int_0^1\frac{dx}{\ds(1-x^p)^\frac1p}.\]
We define $\sin_p$, the $p$-sine function, as the following periodic extension of $z(t)$:
\[
\sin_p(t)=\left\{ \begin{split}
& z(t) &&\text{if}\ \ t\in\left[0,\frac{\pi_p}{2}\right],\\
& z(\pi_p-t) &&\text{if}\ \ t\in\left[\frac{\pi_p}{2}, \pi_p\right], \\
& -\sin_p(-t)\  &&\text{if}\ \ t\in\left[-\pi_p, 0\right]. \\
\end{split}
\right.
\]
It is extended periodically to all $\R$, with period $2\pi_p$. The $p$-cosine function is defined as
\[
\cos_p( t) =\sin_p \left( t + \frac{\pi_p}{2}\right)
\]
and it is again an even function with period $2 \pi_p$. Let us explicitely observe that these generalized sine and cosine function coincide with the usual ones when $p=2$ and that they have continuous second derivative if $1<p<2$ and continuous first derivative if $2<p<\infty$ (see \cite{O}). For further details we refer for example to \cite{L}. 
The study of the $p$-circular functions is connected with the $1$-dimensional Dirichlet $p$-Laplacian eigenvalue problem. Indeed, the minimum $\lambda_p$ of the Rayleigh quotient
\[
\mathcal Q_p(u)=\frac{\ds\int_{-1}^1 |u'(x)|^p \ dx}{\ds\int_{-1}^1 |u(x)|^p \ dx} \qquad (1<p<\infty),
\]
 among all real valued functions $u\in W_{0}^{1,p}$, is the first eigenvalue $\lambda_p$ of the problem
\begin{equation*}
\left\{
\begin{array}{ll}
-(|y'|^{p-2}y')' =\lambda_p\, |y|^{p-2}y& \text{in}\ ]-1,1[\\[.4cm]
y(-1)=y(1)=0.
\end{array}
\right.
\end{equation*}
This first eigenvalue is just $\left(\frac{\pi_p}{2}\right)^p$ and the first eigenfunction is represented by $\sin_p(\pi_p x)$, up to a multiplicative constant.
\subsection{Some properties of the eigenvalue problem}
 Now we list some properties of the minimizers of problem \eqref{operat}. % If $y$ is a minimizer in \eqref{operat}, then is not restrictive to suppose that \[ \int_{-1}^{1}|y|^{r-1}y\,dx\ge 0.\]  Throughout this paper we assume that this condition is satisfied. 
We argue similarly as in \cite{DP}, where some of these properties have been proved in the case when $p=2$.
\begin{prop}
\label{propr}
%Propriet\`a di $\lambda(\alpha,r)$: Lipschitz, monotonia, limite... 
Let $\alpha\in \R$, $p\geq2$ and $\frac p2\le r \le p$, then the following properties hold.
\begin{enumerate}
\item[(a)] Problem \eqref{operat} has a solution.
\item[(b)] Any minimizer $y$ of \eqref{operat} satisfies the following boundary value problem
\begin{equation}
\label{el}
\left\{
\begin{array}{ll}
-(|y'|^{p-2}y')' + \alpha \gamma |y|^{r-1}=\lambda_\alpha(p,r)\, |y|^{p-2}y& \text{in}\ ]-1,1[\\[.4cm]
y(-1)=y(1)=0,
\end{array}
\right.
\end{equation}
where 
\[
\gamma=
\begin{cases}
0 &\text{if both } r=p \text{ and }\displaystyle \int_{-1}^1 |y|^{p-1}y\  dx=0, \\
\displaystyle\left|\ds\int_{-1}^1 |y|^{r-1}y\  dx\right|^{\frac{p}{r}-2}\left(\displaystyle \int_{-1}^1 |y|^{r-1}y\  dx \right) &\text{otherwise}.
\end{cases}
\]
%(we mean $\gamma=0$ if $r=2$ and $\int_{-1}^1 y|y|\  dx=0$).
Moreover, $y, y'|y'|^{p-2}\in C^1[-1,1]$. 
\item[(c)] The function $\lambda_{\alpha}(p,r)$ is Lipschitz continuous and non-decreasing with respect to $\alpha\in\R$. 
\item[(d)] If $\alpha\le 0$, the minimizers of \eqref{operat} do not change sign in $]-1,1[$, and 
\[
\ds\lim_{\alpha\to -\infty}\lambda_\alpha(p,r)=-\infty.
\]
\item[(e)] We have that
{\[
\ds\lim_{\alpha\to +\infty}\lambda_\alpha(p,r)= \Lambda(p,r)=\pi_p^{p}.
\]}
\end{enumerate}
\end{prop}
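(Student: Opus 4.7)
A single Hölder-type estimate underlies most of the argument: since $r\le p$,
\[
|N(u)|^{p/r}\le\left(\int_{-1}^1|u|^r\,dx\right)^{p/r}\le 2^{p/r-1}\int_{-1}^1|u|^p\,dx,\qquad N(u):=\int_{-1}^1|u|^{r-1}u\,dx,
\]
so the nonlocal term is controlled by the denominator of $\mathcal Q_\alpha$. With this, the direct method gives (a): $\mathcal Q_\alpha$ is bounded below, a minimizing sequence normalized by $\int|u_n|^p\,dx=1$ is bounded in $W_0^{1,p}$, hence (up to subsequences) weakly convergent in $W_0^{1,p}$ and strongly in $L^p\cap L^r$ by compact embedding, to some $y$ with $\int|y|^p\,dx=1$; weak lower semicontinuity of the Dirichlet term and continuity of $N$ in $L^r$ yield minimality. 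For (b) I compute the first variation along $y+\varepsilon\varphi$: when $r<p$, or when $r=p$ with $N(y)\ne 0$, the map $t\mapsto|t|^{p/r}$ is $C^1$ at $N(y)$ and produces the stated $\gamma$. In the remaining case $r=p$ with $N(y)=0$ the functional is not differentiable at $y$; however $|N(y+\varepsilon\varphi)|=O(\varepsilon)$ for every admissible $\varphi$, so the nonlocal term contributes nothing to the Euler--Lagrange equation, which is exactly the prescription $\gamma=0$. Regularity of $y$ and of $|y'|^{p-2}y'$ then follows from the ODE in (b) by a standard bootstrap.

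For (c), monotonicity is immediate because $\mathcal Q_\alpha[u]$ is non-decreasing in $\alpha$. The Lipschitz bound follows by testing: if $y_i$ minimizes $\lambda_{\alpha_i}$ with $\int|y_i|^p\,dx=1$, then
\[
\lambda_{\alpha_2}-\lambda_{\alpha_1}\le \mathcal Q_{\alpha_2}[y_1]-\mathcal Q_{\alpha_1}[y_1]=(\alpha_2-\alpha_1)\,|N(y_1)|^{p/r}\le 2^{p/r-1}(\alpha_2-\alpha_1),
\]
and symmetrically in the other direction. For (d) with $\alpha\le 0$, since $|N(|u|)|=\int|u|^r\ge|N(u)|$ while the Dirichlet and $L^p$ parts are unchanged, replacing $u$ by $|u|$ does not increase $\mathcal Q_\alpha$; hence a non-negative minimizer exists, and the strong maximum principle applied to the equation in (b) forces $y>0$ in $(-1,1)$. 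Testing with any fixed positive $u_0\in W_0^{1,p}$ gives $\mathcal Q_\alpha[u_0]\to-\infty$ as $\alpha\to-\infty$, so $\lambda_\alpha(p,r)\to-\infty$.

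Finally, for (e) the upper bound $\lambda_\alpha(p,r)\le\pi_p^p$ follows by testing with $y_0(x)=\sin_p(\pi_p x)$: by oddness $N(y_0)=0$, hence $\mathcal Q_\alpha[y_0]=\pi_p^p$ for every $\alpha$. For the matching lower bound as $\alpha\to+\infty$, let $y_\alpha$ be minimizers normalized by $\int|y_\alpha|^p\,dx=1$; from $\alpha\,|N(y_\alpha)|^{p/r}\le\lambda_\alpha(p,r)\le\pi_p^p$ one reads $N(y_\alpha)\to 0$ and $\int|y_\alpha'|^p\,dx$ uniformly bounded. Extract $y_\alpha\rightharpoonup y$ weakly in $W_0^{1,p}$ and strongly in $L^p\cap L^r$, so $\int|y|^p\,dx=1$ and $N(y)=0$; hence $y$ is admissible in the twisted problem \eqref{twist} with $p=q$, whose value is $\pi_p^p$, and weak lower semicontinuity closes the chain $\liminf_{\alpha\to+\infty}\lambda_\alpha(p,r)\ge\int|y'|^p\,dx\ge\pi_p^p$. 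The main obstacle I anticipate is the borderline case $r=p$ with $N(y)=0$ in (b), where the non-differentiability forces a subdifferential (or smooth-approximation) argument; the other items are routine direct-method and comparison-on-minimizers arguments.
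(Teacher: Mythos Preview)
Your approach matches the paper's throughout: direct method for (a), first variation for (b), the H\"older comparison for (c), the $u\mapsto|u|$ replacement for (d), and compactness plus lower semicontinuity for (e). The one genuine gap is precisely where you flag it, but your in-line argument for that case is actually incorrect, not merely sketchy. When $r=p$ and $N(y)=0$ you claim that $|N(y+\varepsilon\varphi)|=O(\varepsilon)$ implies the nonlocal term contributes nothing to the Euler--Lagrange equation. But here $p/r=1$, so the nonlocal term is $\alpha\,|N(y+\varepsilon\varphi)|$ itself, and an $O(\varepsilon)$ quantity is \emph{exactly} first order: the one-sided derivatives of $\varepsilon\mapsto\mathcal Q_\alpha[y+\varepsilon\varphi]$ at $0$ pick up the extra contribution $\pm\alpha|D|$ with $D=p\int_{-1}^1|y|^{p-1}\varphi\,dx$. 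Minimality then only yields
\[
\left|\,p\int_{-1}^1|y'|^{p-2}y'\varphi'\,dx-\lambda\, p\int_{-1}^1|y|^{p-2}y\,\varphi\,dx\,\right|\le \alpha\,|D|,
\]
which (restricting to test functions with $D=0$) gives $-(|y'|^{p-2}y')'-\lambda|y|^{p-2}y=c\,|y|^{p-1}$ for \emph{some} constant $c$, but does not force $c=0$. The paper closes this by observing that such a $y$ is simultaneously a minimizer of the constrained (``twisted'') problem \eqref{twist}, and then invokes \cite[Lem.~2.4]{DGS} to conclude that the Lagrange multiplier vanishes; your proposed subdifferential or approximation route may well succeed, but something beyond the $O(\varepsilon)$ observation is genuinely required.

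A smaller point in (d): you only conclude that \emph{a} non-negative minimizer exists, whereas the statement asserts that \emph{every} minimizer has constant sign. For $\alpha<0$ this follows because the inequality $\mathcal Q_\alpha[u]\ge\mathcal Q_\alpha[|u|]$ is strict whenever $u$ changes sign (then $|N(u)|<\int_{-1}^1|u|^r\,dx$); for $\alpha=0$ it is the classical simplicity of the first Dirichlet $p$-eigenfunction.
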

\begin{proof}
By the method of Calculus of Variations it is easily proved the existence of a minimizer. Furthermore, any minimizer satisfies \eqref{el}. This follows in a standard way if $r<p$, since the functional $\mathcal Q_\alpha$ in \eqref{Qualpha} is differentiable in $u$. When $r=p$, this functional is not differentiable if $\int_{-1}^1 |y|^{r-1}y\  dx=0.$ Actually, in this case, the problem \eqref{operat} coincides with the minimum of the functional $\mathcal Q_\alpha$ among the functions satisfying $\int_{-1}^1 |y|^{r-1}y\  dx=0$ and, by \cite[Lem. 2.4]{DGS}, it follows that $\gamma=0$. From \eqref{el} immediately follows that $y, y'|y'|^{p-2}\in C^1[-1,1]$ and hence \textit{(a)-(b)} have been proved. 

 In order to get property \textit{(c)}, we stress that for all $\eps>0$, by H\"older inequality, it holds
\begin{equation*}
\mathcal{Q}_{\alpha+\eps} [u]\le\mathcal{Q}_\alpha[u]+\eps\frac{\left(\ds\int_{-1}^{1} |u|^r\  dx\right)^{p/r}}{\ds\int_{-1}^{1} |u|^p\  dx}\leq\mathcal{Q}_\alpha[u]+ 2^\frac{p-r}{r}\eps,\quad\forall\, \eps>0.
\end{equation*}
Therefore the following chain of inequalities
\begin{equation*}
\mathcal{Q}_\alpha[u]\leq\mathcal{Q}_{\alpha+\eps}[u]\le\mathcal{Q}_\alpha[u]+ 2^\frac{p-r}{r}\eps,\quad\forall \ \varepsilon>0,
\end{equation*}
implies, taking the minimum as $u\in W_0^{1,p}(-1,1)$, that
\begin{equation*}
\lambda_\alpha (p,r)\leq\lambda_{\alpha+\varepsilon} (p,r)\leq\lambda_ \alpha (p,r)+2^\frac{p-r}{r}\varepsilon,\quad\forall \ \varepsilon>0,
\end{equation*}
that proves \textit{(c)}. 
If $\alpha< 0$, then
\[
\mathcal{Q}_\alpha[u] \ge \mathcal Q_{\alpha}[|u|],
\]
with equality if and only if $u\ge 0$ or $u\le 0$. Hence any minimizer has constant sign in $]-1,1[$. Finally, it is clear from the definition that $\ds\lim_{\alpha\to-\infty}\lambda_\alpha(p,r)=-\infty$. Indeed, by fixing a positive test function $\varphi$ we get
\[
\lambda_\alpha(p,r) \le \mathcal Q_{\alpha}[\varphi].
\]
Being $\varphi>0$ in $]-1,1[$, then  $\mathcal Q_{\alpha}[\varphi] \to -\infty \quad\text{as }\alpha\to -\infty$,
 and the proof of \textit{(d)} is completed.
The problem \eqref{twist} was studied, for example, in \cite{CD, GN} and the minimum $\Lambda(p,r)$ is equal to $\pi_p^p$.  In particular, if there exists a minimizer $y$ of $\lambda_\alpha(p,r)$ such that $\int_{-1}^{1}|y|^{r-1}y\,dx=0$, then it holds that $\gamma=0$ in \eqref{el}. Indeed, in such a case $y$ is a minimizer also of the problem \eqref{twist}, whose Euler-Lagrange equation is
\begin{equation*}
\left\{
\begin{array}{ll}
-(|y'|^{p-2}y')' =\lambda_\alpha(p,r)\, |y|^{p-2}y\quad &\text{in}\ ]-1,1[,\\[.4cm]
y(-1)=y(1)=0.
\end{array}
\right.
\end{equation*}
Since $\lambda(\alpha, p, r)$ is decreasing with respect to $\alpha$, we have that $\lambda_\alpha(p,r)\le  \Lambda(p,r)=\pi_p^{p}$. Now, let $\alpha_k\ge 0$, $k_n\in\N$, be a positively divergent sequence. For any $k$, we consider a minimizer $y_k\in W_0^{1,p}$ of (\ref{operat}) such that $\|y_k\|_{L^p}=1$. We have that
\begin{equation*}
\lambda_{\alpha_k} (p,r)=\int_{-1}^{1} |y'_k|^p\  dx + \alpha_k\left(\ds\int_{-1}^{1} |y_k|^{r-1}y_k \  dx\right)^\frac{p}{r}\leq \Lambda(p,r).
\end{equation*}
Then $y_k$ converges (up to a subsequence) to a function $y\in W_0^{1,p}(-1,1)$, strongly in $L^p$ and weakly in $W_0^{1,p}$. Moreover $\|y\|_{L^p}=1$ and 
\begin{equation*}
\left( \int_{-1}^{1} |y_k|^{r-1}y_k \  dx\right)^\frac{p}{r}\leq\frac{ \Lambda(p,r)}{\alpha_k}\rightarrow 0 \quad\text{as}\ k\rightarrow + \infty
\end{equation*}
which gives that $\int_{-1}^1 |y|^{r-1}y \  dx=0$. On the other hand the weak convergence in $W_0^{1,p}$ implies that
\begin{equation}
\label{lsclap}
\int_{-1}^{1} |y'|^p\  dx \leq \liminf_{k \rightarrow \infty}\int_{-1}^{1} |y'_k|^p\  dx.
\end{equation}
Therefore, by the definitions of $\Lambda(p,r)$ and $\lambda_\alpha(p,r)$, and by (\ref{lsclap}) we have
\begin{equation*}
\begin{split}
 \Lambda(p,r)\le
\int_{-1}^{1} |y'|^p\  dx &\leq \liminf_{k \rightarrow \infty}\left[\int_{-1}^{1} |y'_k|^p\  dx
+ \alpha_k\left( \int_{-1}^{1} |y_k|^{r-1}y_k \  dx\right)^\frac{p}{r}\right]\\
&\leq\lim_{k \rightarrow \infty}\lambda(\alpha_k,p,r)\leq \Lambda(p,r).
\end{split}
\end{equation*}
and the property \textit{(e)} follows.
\end{proof}

\begin{rem} Let us observe that when $\lambda_\alpha(p,r)=0$, we have (as in \cite{DP}):
\begin{equation*}
\label{lapuq2}
-\alpha=\min_{w\in W_0^{1,p}(-1,1)}\frac{\ds\int_{-1}^{1} |w'|^p dx}{\left(\ds\int_{-1}^{1} |w|^{r} \ {d}x\right)^{p/r}}.
\end{equation*}
%Hence we have that $\alpha<0$ and the minimizers of \eqref{operat} have constant sign. Now let us consider a minimizer $y$ of \eqref{operat} with constant sign, we have \begin{equation*}0=\lambda_\alpha(p,r)=\frac{\ds\int_{-1}^{1} | y'|^p\ dx+\alpha \left(\int_{-1}^{1} |y|^{r}\ dx\right)^{\frac{p}{r}}}{\ds\int_{-1}^{1} |y|^p\ dx}\end{equation*}and therefore\begin{equation}\label{alpugu}-\alpha=\dfrac{\ds\int_{-1}^{1} | y'|^pdx}{\ds\left(\int_{-1}^{1} |y|^{r} \ dx\right)^{p/r}}.\end{equation}If $v$ is a minimizer of problem \eqref{lapuq2}, we have\begin{equation*}0\leq \int_{-1}^{1} | v'|^p\  dx+\alpha \left(\int_{-1}^{1} |v|^{r}\ {d}x\right)^{{p}/{r}},\end{equation*}hence\begin{equation*}%\label{alpleq}-\alpha\leq\dfrac{\ds\int_{-1}^{1} | v'|^pdx}{\ds\left(\int_{-1}^{1} |v|^{r} \ {d}x\right)^{p/r}}=\min_{w\in W_0^{1,p}(-1,1)}\frac{\ds\int_{-1}^{1} |w'|^p dx}{\left(\ds\int_{-1}^{1} |w|^{r} \ {d}x\right)^{p/r}}.\end{equation*}and, from (\ref{alpugu}) follows \eqref{lapuq2}.
\end{rem}

\section{The symmetry of the solutions}\label{section_prop_min}
The main result of this Section, contained in Proposition \ref{cambiosegno}, consists in the fact that each minimizer of problem \eqref{operat} is represented by a generalized sine function, that is symmetric and whose $(r-1)$-power has zero average. This result will allow us to prove, in the following Section, the existence of a critical value of the parameter for the problem \eqref{operat} such that the minimizers are symmetric above this value. 

A key role in the proof of the main results is played by the minimizers that change sign in $]-1,1[$. In the following Proposition we find an expression of the first nonlocal eigenvalue $\lambda_\alpha(p,r)$ with an auxiliary function $H$, whose study leads us to show important properties of problem \eqref{operat}.
\begin{prop}
\label{cambiosegno0}
Let $p\geq2$, $\frac p2\le r \le p$ and suppose that there exists $\alpha>0$ such that $\lambda_\alpha(p,r)$ admits a minimizer $y$ that changes sign in $[-1,1]$. Then the following properties hold.
\begin{enumerate}
\item[(a)] The minimizer $y$ has in $]-1,1[$ exactly one maximum point, $\eta_{M}$, and exactly one minimum point, $\eta_{\bar m}$, and, up to a multiplicative constant,  is such that $y(\eta_{M})=1$ and $y(\eta_{\bar m})= - \bar m\in\ ]0,1]$. 
\item[(b)] If $y_{+}\ge0$ and $y_{-}\le 0$ are, respectively, the positive and negative part of $y$, then $y_{+}$ and $y_{-}$ are, respectively, symmetric about $x=\eta_{M}$ and $x=\eta_{\bar m}$.
\item[(c)] There exists a unique zero of $y$ in $]-1,1[$.
\item[(d)] In the minimum value $\bar m$ of $y$, it holds that  
\[
\lambda_\alpha(p,r)=(p-1)H(\bar m,p,r)^p,
\] 
where $H(m,p,r)$, $(m,p,r)\in[0,1]\times[2,+\infty[\times\left[\frac p2,p\right]$, is the function defined as
\begin{equation*}
\begin{split}
&H(m,p,r):= \int_{-m}^1\frac{dy}{ [1- R(m,p,r)(1- |y|^{r-1}y) - |y|^p]^\frac1p}=\\[.3cm]
=\int_{0}^1 & \frac{dy}{ [1-R(m,p,r)(1-y^{r}) - y^p]^\frac1p} + \int_{0}^1\frac{mdy}{ [1-R(m,p,r)(1+m^r y^{r}) -m^p y^p]^\frac1p}
\end{split}
\end{equation*}
and $R(m,p,r)=\frac{1-m^p}{1+m^r}$.
\end{enumerate}
\end{prop}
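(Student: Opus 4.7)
The plan is to reduce the problem to a classical phase-plane analysis of the Euler--Lagrange equation from Proposition~\ref{propr}(b). Up to replacing $y$ with $-y$, I may assume $\gamma\ge 0$. Multiplying the equation by $y'$ and using $\frac{d}{dx}|y|^{p}=p|y|^{p-2}yy'$ together with $\frac{d}{dx}(|y|^{r-1}y)=r|y|^{r-1}y'$ yields the first integral
\[
\frac{p-1}{p}|y'|^{p}+\frac{\lambda_{\alpha}}{p}|y|^{p}-\frac{\alpha\gamma}{r}|y|^{r-1}y=C\quad\text{on }[-1,1].
\]
Rescale $y$ so that $\max y=1$ at some $\eta_{M}\in(-1,1)$ and set $-\bar m:=\min y$ at $\eta_{\bar m}\in(-1,1)$ (both are interior because of the Dirichlet conditions). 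Evaluating the first integral at $\eta_{M}$ and at $\eta_{\bar m}$ and eliminating $C$ gives $\alpha\gamma=\frac{r\lambda_{\alpha}}{p}R(\bar m,p,r)$ and $C=\frac{\lambda_{\alpha}}{p}(1-R(\bar m,p,r))$; since $\alpha\gamma\ge 0$ and $\lambda_{\alpha}>0$, this forces $\bar m\in(0,1]$.

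Next I would show that the only interior critical values of $y$ are $1$ and $-\bar m$. At any critical point $\xi$ the conservation law gives $F(y(\xi))=C$ with $F(s):=\frac{\lambda_{\alpha}}{p}|s|^{p}-\frac{\alpha\gamma}{r}|s|^{r-1}s$; studying the sign of $F'$ on $(-\infty,0)$ and $(0,\infty)$ separately, using $\alpha\gamma\ge 0$ and $p/2\le r\le p$, shows $F(s)=C$ has a unique positive root (namely $s=1$) and a unique negative root ($s=-\bar m$). Since the ODE is autonomous and the conservation law presents $|y'|$ as an explicit function of $y$, the reflection $x\mapsto 2\eta_{M}-x$ yields another solution agreeing with $y$ in value and derivative at $\eta_{M}$, hence coinciding with $y$ on the positive lobe containing $\eta_{M}$. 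This proves (b), and implies that every positive lobe has a common length $2\tau_{+}$ and every negative lobe a common length $2\tau_{-}$.

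For (c), I would argue by contradiction using minimality. Suppose $y$ has $N\ge 2$ zeros in $(-1,1)$; then $y$ contains at least $k\ge 2$ full periods, where a period is a pair of adjacent opposite-sign lobes of total length $2/k$. Define a competitor $\tilde y\in W_{0}^{1,p}(-1,1)$ by affinely rescaling a single period back onto $[-1,1]$. A straightforward change of variables shows
\[
\int_{-1}^{1}|\tilde y|^{p}\,dx=\int_{-1}^{1}|y|^{p}\,dx,\qquad \int_{-1}^{1}|\tilde y|^{r-1}\tilde y\,dx=\int_{-1}^{1}|y|^{r-1}y\,dx,
\]
while $\int_{-1}^{1}|\tilde y'|^{p}\,dx=k^{-p}\int_{-1}^{1}|y'|^{p}\,dx$. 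These identities yield $\mathcal{Q}_{\alpha}[\tilde y]<\mathcal{Q}_{\alpha}[y]$, contradicting the minimality of $y$. Hence $N=1$, giving (c), and together with the previous step, (a).

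For (d), the unique zero $x_{0}$ together with lobe symmetry force $\eta_{M}=\tfrac{x_{0}-1}{2}$ and $\eta_{\bar m}=\tfrac{x_{0}+1}{2}$, so $\eta_{\bar m}-\eta_{M}=1$. On $[\eta_{M},\eta_{\bar m}]$, $y$ is strictly decreasing from $1$ to $-\bar m$; solving the conservation law for $|y'|$ and integrating $dx=-dy/|y'|$ gives
\[
1=\eta_{\bar m}-\eta_{M}=\Bigl(\tfrac{p-1}{\lambda_{\alpha}}\Bigr)^{1/p}\int_{-\bar m}^{1}\frac{dy}{\bigl[1-R(1-|y|^{r-1}y)-|y|^{p}\bigr]^{1/p}}=\Bigl(\tfrac{p-1}{\lambda_{\alpha}}\Bigr)^{1/p}H(\bar m,p,r),
\]
whence $\lambda_{\alpha}(p,r)=(p-1)H(\bar m,p,r)^{p}$; the two-term form of $H$ follows by splitting at $y=0$ and substituting $y=-\bar m t$ on the negative portion. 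The main obstacle is the truncation argument for (c): when the numbers of positive and negative lobes differ, the selection of a ``full period'' is asymmetric, so the block (and possibly an auxiliary reflection of $y$) must be chosen so that both endpoints are genuine zeros of $y$, ensuring $\tilde y\in W_{0}^{1,p}$ and that the scaling identities above hold intact.
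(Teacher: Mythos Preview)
Your derivation of the first integral, the identification of the constants in terms of $R(\bar m,p,r)$, the argument that the only critical values of $y$ are $1$ and $-\bar m$, the lobe--symmetry via reflection, and the final integration yielding $\lambda_\alpha=(p-1)H(\bar m,p,r)^p$ are all correct and coincide with the paper's route for (a), (b) and (d).

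The genuine gap is in your treatment of (c). The claim ``$y$ contains at least $k\ge 2$ full periods'' already fails in the first case you need to exclude, namely $N=2$: three alternating lobes $+\,-\,+$ (or $-\,+\,-$) contain only \emph{one} pair of opposite--sign lobes, not two, so no $k\ge 2$ is available. More seriously, your scaling identities $\int_{-1}^1|\tilde y|^p=\int_{-1}^1|y|^p$ and $\int_{-1}^1|\tilde y|^{r-1}\tilde y=\int_{-1}^1|y|^{r-1}y$ rely on $y$ being a $k$--fold periodic repetition of the chosen block. When the numbers of positive and negative lobes differ by one, $y$ is not such a repetition, and a direct computation (e.g.\ for the pattern $+\,-\,+$, where $\int|y|^p=2\int_{+}|y|^p+\int_{-}|y|^p$ while a single rescaled $+\,-$ block gives a different combination) shows that neither identity survives. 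The ``auxiliary reflection'' you propose does not obviously repair this, because reflecting a $+$ lobe produces a bump with the shape of $y_+$, not of $y_-$, so the $r$--moment of the competitor is altered and the comparison of $\mathcal Q_\alpha[\tilde y]$ with $\mathcal Q_\alpha[y]$ breaks down.

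The paper does not argue (c) via such a rescaled competitor. After establishing that all positive lobes share a common length and likewise for the negative ones (your $2\tau_\pm$), it invokes the argument of \cite[Lemma~2.6]{DGS} (carried out for $p=2$ in \cite{DP}); alternatively it remarks that (a)--(c) follow from symmetrization: replacing $y_+$ and $y_-$ by their symmetric decreasing rearrangements on intervals of length $|\{y>0\}|$ and $|\{y<0\}|$ preserves both $\int_{-1}^1|y|^p$ and $\int_{-1}^1|y|^{r-1}y$ exactly (these depend only on the distribution functions of $y_+$ and $y_-$), while P\'olya--Szeg\H{o} strictly decreases $\int_{-1}^1|y'|^p$ whenever either sign has more than one lobe. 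That strict inequality is precisely the missing ingredient that rules out $N\ge 2$ without any periodicity assumption.
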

%\begin{rem}
%If $\alpha\le 0$, for any $1\le r \le 2$ the minimizers of $\lambda(\alpha,r)$ has constant sign. Hence in this case the above proposition cannot be applied. 
%\end{rem}

\begin{proof}%[Proof of Proposition \ref{cambiosegno0}] 
Let us suppose that $\lambda_\alpha(p,r)$ admits a minimizer $y$ that changes sign and that
\[
\max_{[-1,1]} y(x)=1,\quad \min_{[-1,1]} y(x)=-\bar m,\quad \text{with}\ \bar m\in]0,1].
\]
It is always possible to reduce to this condition by multiplying the solution for a suitable positive constant. 
Let us consider $\eta_{M},\eta_{\bar m}$ in $]-1,1[$ such that $y(\eta_{M})=1=\max_{[-1,1]}y$, and  $y(\eta_{\bar m})=-\bar m=\min_{[-1,1]}y$. For the sake of simplicity, we will write $\lambda=\lambda_\alpha(p,r)$.
If we multiply the equation in \eqref{el} by $y'$ and integrate, we get
\begin{equation}
\label{inel1d}
\frac{|y'|^p}{p'}+\lambda \frac{|y|^p}{p} = \frac{\alpha\gamma}{r} |y|^{r-1}y + c\qquad\ \text{in } ] -1,1[,
\end{equation}
for a suitable constant $c$ and $\frac1p + \frac{1}{p'}=1$. 
Being $y'(\eta_{M})=0$ and $y(\eta_{M})=1$, we have
\begin{equation}
\label{cosmax}
c=\frac{\lambda}{p}-\frac{\alpha}{r} \gamma.
\end{equation}
Moreover, $y'(\eta_{\bar m})=0$ and $y(\eta_{\bar m})=-m$ give also that
\begin{equation}
\label{cosmin}
c=\lambda\frac{\bar m^p}{p}+\frac{\alpha}{r} \bar m^r\gamma.
\end{equation}
Joining \eqref{cosmax} and \eqref{cosmin}, we obtain
\begin{equation}
\label{costnl}
\left\{
\begin{array}{l}
 \gamma =\frac{\displaystyle r\lambda}{\displaystyle p \alpha}R(\bar m,p,r)\\[.3cm]
c=\frac{\ds\lambda}{\displaystyle p}T(\bar m,p,r)
\end{array}
\right.
\end{equation}
where
\begin{equation*}
R(m,p,r)=\frac{1-m^p}{1+m^r} \quad\text{and}\quad T(m,p,r)=\frac{m^p+m^r}{1+m^r}=1-R(m,p,r).
\end{equation*}
%Now let us observe that the function $R$ is positive for $m\in]0,1[$ and negative for $m>1$. As a consequence, by the first equation of \eqref{costnl}, we can say that the $r$-average is positive for $m\in]0,1[$ and negative for $m>1$. Throughout this Section we will assume that \[\bar m=-\min_{[-1,1]} y(x)\in\ ]0,1].\] Indeed if we consider only the case of $m\in]0,1[$, indeed if we consider $-y$ instead of $y$, then it satisfies the same equations \eqref{el} and \eqref{inel1d}. 
Then \eqref{inel1d} can be written as 
\begin{equation}\label{integratedELconstant}\frac{|y'|^p}{p'}+\lambda \frac{|y|^p}{p} = \frac{\lambda}{p} R(\bar m,p,r) |y|^{r-1}y + \frac{\lambda}{p} (1-R(\bar m,p,r))\qquad\ \text{in }  ]-1,1[.\end{equation}
From \eqref{integratedELconstant}, we have
\begin{equation*}
\label{integratedel}
|y'|^p=\frac{\lambda}{p-1} (1-R(\bar m,p,r)(1- |y|^{r-1}y) - |y|^p) \qquad\ \text{in}  ]-1,1[.
\end{equation*}
It is easy to see that the number of zeros of $y$ has to be finite, hence let 
\[
-1=\zeta_{1}<\ldots<\zeta_{j}<\zeta_{j+1}<\ldots<\zeta_{n}=1
\] 
be the zeroes of $y$. As observed in \cite{CD}, it is easy to show that 
\begin{equation*}
\label{dac}
y'(x)=0 \iff y(x)=-\bar m\text{ or }y(x)=1.
\end{equation*}
This implies that $y$ has no other local minima or maxima in $]-1,1[$, and in any interval $]\zeta_{j},\zeta_{j+1}[$ where $y>0$ there is a unique maximum point, and in any interval $]\zeta_{j},\zeta_{j+1}[$ where $y<0$ there is a unique minimum point. 

Now, we set
\[
g(Y):=1-R(\bar m,p,r)(1-|Y|^{r-1}Y)-Y^{p},\quad Y\in [-\bar m,1],
\]
and we have
\begin{equation}
\label{CDproof}
|y'|^{p}=\frac{\lambda}{p-1}\, g(y).
\end{equation}
Let us observe that $g(-\bar m)=g(1)=0$. Being $p\ge r$, it holds that $g'(\bar Y)=0$ implies $g(\bar Y)>0$. Hence, $g$ does not vanish in $]-\bar m,1[$. By \eqref{CDproof}, it holds that $y'(x)\ne 0$ if $y(x)\ne 1 $ and $y(x)\ne -\bar m$. 

Now, we will adapt the argument of \cite[Lemma 2.6]{DGS}. The following three claims below allow to complete the proof of \textit{(a)}, \textit{(b)} and \textit{(c)}.

\begin{description}
\item[Claim 1:] in any interval $]\zeta_{j},\zeta_{j+1}[$ given by two subsequent zeros of $y$ and in which $y=y^{+}>0$, has the same length; in any of such intervals, $y^{+}$ is symmetric about $x=\frac{\zeta_{j}+\zeta_{j+1}}{2}$;
\item[Claim 2:] in any interval $]\zeta_{j},\zeta_{j+1}[$ given by two subsequent zeros of $y$ and in which $y=y^{-}<0$ has the same length; in any of such intervals, $y^{-}$ is symmetric about $x=\frac{\zeta_{j}+\zeta_{j+1}}{2}$;
\item[Claim 3:] there is a unique zero of $y$ in $]-1,1[$.
\end{description}

This result was proved in the case $p=2$ in \cite{DP} and following this proof, we can show the result in the hypothesis of the Proposition. Properties \textit{(a)}, \textit{(b)} and \textit{(c)} can be also proved by using a symmetrization argument, by rearranging the functions $y^{+}$ and $y^{-}$ and using the P\'olya-Szeg\H o inequality and the properties of rearrangements (see also, for example, \cite{BFNT} and \cite{D}).

Now denote by $\eta_{M}$ and $\eta_{\bar m}$, respectively, the unique maximum and minimum point of $y$. It is not restrictive to suppose $\eta_{M}<\eta_{\bar m}$. They are such that $\eta_{M}-\eta_{\bar m}=1$, with $y'<0$ in $]\eta_{M},\eta_{\bar m}[$. Then
\begin{equation*}{\lambda}^\frac 1 p=(p-1)^\frac 1p\frac{-y'}{[1-R(\bar m,p,r)(1-|y|^{r-1}y)- y^p]^\frac 1 p} \qquad\ \text{in} \ ]\eta_{M},\eta_{\bar m}[.
\end{equation*}
%We write
%\begin{equation*}
%\frac{|y'|}{ [1-z(m,r)(1-|y|^{r-1}y) - y^2]^{1/2}}=\lambda^{1/2} \qquad\ \text{on} \ [\eta_1,\eta_2].
%\end{equation*}
Integrating between $\eta_{M}$ and $\eta_{\bar m}$, we have
\begin{equation*}
\lambda=(p-1)\left[\int_{-\bar m}^1\frac{dy}{ [1-R(\bar m,p,r)(1-|y|^{r-1}y)- y^p]^\frac 1 p}\right]^{p}= (p-1)H(\bar m,p,r)^p,
\end{equation*}
and the proof of the Proposition is completed.
\end{proof}

To prove the main result of this Section, we will show the monotonicity of the function $H(m,p,r)$, defined in Proposition \ref{cambiosegno0}, with respect to $r$ (Lemma \ref{prop-monotonia-r}) and with respect to $m$ (Lemma \ref{Hdecr}).
%For example,
%\begin{equation*}
%\begin{split}
%H(m,1)=\int_{-m}^1\frac{dy}{ \sqrt{1-z(m,1)(1- y) - y^2}}=\int_{-m}^1\frac{dy}{ \sqrt{1-\frac{1-m^2}{1+m}(1- y) - y^2}}=\\\int_{-m}^1\frac{dy}{ \sqrt{(1-y^2)-(1-m)(1- y)}}=\int_{-m}^1\frac{dy}{ \sqrt{-y^2+(1-m)y+m}}.
%\end{split}
%\end{equation*}
%Then a simple computation yields
%\begin{equation*}
%\begin{split}
%H(m,1)=\left[-\arcsin\left(\frac{-2y +1-m}{m+1}\right)\right]_{y=-m}^{y=1}=\pi.
%\end{split}
%\end{equation*}
%Moreover,
%\begin{equation*}
%\begin{split}
%H(m,2)=&\int_{0}^1\frac{dy}{ \sqrt{1-z(m,2)(1- y^2) - y^2 }}+\int_{-m}^0{\frac{dy}{ \sqrt{1-z(m,2)(1+y^2) - y^2 }}}=\\
%=&\sqrt{\frac{1}{1-z(m,2)}}\int_{0}^1\frac{dy}{ \sqrt{1 - y^2}}+\int_{-m}^0{\frac{\sqrt{1+m^2}dy}{ \sqrt{(1+m^2)(1-y^2)-(1-m^2)(1+y^2) }}}=\\
%=&\sqrt{\frac{1+m^2}{2m^2}}\int_{0}^1\frac{dy}{ \sqrt{1 - y^2 }}+\int_{-m}^0{\frac{\sqrt{1+m^2}dy}{ \sqrt{2m^2-2y^2 }}}=\\
%=&\sqrt{\frac{1+m^2}{2}}\left(\frac{1}{m}\left[-\arcsin\left(y\right)\right]_{y=0}^{y=1}+\left[-\arcsin\left(\frac{y}{m}\right)\right]_{y=-m}^{y=0}  \right)\\
%=&\frac{\pi}{2}\sqrt{\frac{1+m^2}{2}}\left(\frac{1}{m}+1  \right)\geq\pi.
%\end{split}
%\end{equation*}

The proof of the monotonicity with respect to $r$ is based on the study of the integrand function that defines $H(m,p,r)$, that is
\begin{equation*}
h(m,p,r,y):=\frac{1}{ [1-R(m,p,r)(1-y^{r}) - y^p ]^\frac 1 p}
+\frac{m}{ [1-R(m,p,r)(1+ m^{r}y^{r})- m^py^p ]^\frac 1p},
\end{equation*}
for $y\in[0,1]$.
Let us explicitly observe that if $m=1$, then $z(1,p,r)=0$ and
\[
h(1,p,r,y)=\frac{2}{ [1- y^p]^\frac 1 p}, 
\]
that is constant in $r$. Moreover, if $y=0$, then
\[
h(m,p,r,0)=\frac{1+m}{[1-R(m,p,r) ]^\frac 1 p}
\]
that is strictly increasing in $r\in[\frac p2,p]$. 

\begin{lem}
\label{prop-monotonia-r}
For any fixed $y\in[0,1[$ and $m\in]0,1[$, the function $h(m,p,\cdot,y)$ is strictly increasing with respect to $r$ as $\frac p2\le r\le p$.
\end{lem}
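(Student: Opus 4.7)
Plan: The approach is to compute $\partial_r h$ directly and show that it is strictly positive. Writing $A := 1-y^p - R(1-y^r)$ and $B := 1 - m^p y^p - R(1+m^r y^r)$, so that $h = A^{-1/p} + mB^{-1/p}$, and using the identity $R(1+m^r) = 1 - m^p =: Z$, one obtains
\[
-\partial_r A = \frac{Z m^r(-\ln m)(1-y^r)}{(1+m^r)^2} + \frac{Z y^r(-\ln y)}{1+m^r} > 0.
\]
Since $A - B = Z(y^r - y^p)$, differentiating yields $\partial_r A - \partial_r B = Z y^r \ln y$, so $-\partial_r B$ has the same first summand as $-\partial_r A$ but with the second one multiplied by $-m^r$. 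Substituting into $p\,\partial_r h = -A^{-(p+1)/p}\partial_r A - m B^{-(p+1)/p}\partial_r B$ and collecting gives
\[
p\,\partial_r h = \frac{Zm^r(-\ln m)(1-y^r)}{(1+m^r)^2}\left[\frac{1}{A^{(p+1)/p}} + \frac{m}{B^{(p+1)/p}}\right] + \frac{Zy^r(-\ln y)}{1+m^r}\left[\frac{1}{A^{(p+1)/p}} - \frac{m^{r+1}}{B^{(p+1)/p}}\right].
\]

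The first summand is strictly positive for $(y,m)\in[0,1[\,\times\,]0,1[$, so it remains to show that the second bracket is non-negative, which is equivalent to the pointwise ratio bound $B/A \ge m^{q}$ with $q := (r+1)p/(p+1)$. To arrange this, I plan to use a uniform-in-$y$ estimate: setting $\psi(y) := (p-r) - p y^r + r y^p$, one checks $\psi(0) = p-r \ge 0$, $\psi(1) = 0$ and $\psi'(y) = pr\,y^{r-1}(y^{p-r}-1) \le 0$ on $[0,1]$, so $\psi \ge 0$, i.e. $(y^r - y^p)/(1 - y^r) \le (p-r)/r$. Substituting this uniform estimate into the ratio bound $B/A \ge m^q$ (after clearing denominators) reduces the claim to the purely $m$-dependent inequality
\[
r(m^p + m^r)(1 - m^q) \ge (p-r)(1 + m^r)(m^q - m^p).
\]

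The main obstacle, and the step where $r \ge p/2$ becomes decisive, is verifying this last inequality. Using the elementary representation $1 - m^t = (-\ln m)\int_0^t m^s\,ds$ and expanding both sides, it is equivalent to
\[
p\int_p^{q+r} m^s\,ds + r\int_r^{p+q} m^s\,ds \ge (p-r)\int_q^{p+r} m^s\,ds.
\]
Since $r \le q$ and $p+r \le p+q$, the interval $[q,p+r]$ is contained in $[r,p+q]$, whence $\int_q^{p+r} m^s\,ds \le \int_r^{p+q} m^s\,ds$. Combining this with $r \ge p-r$ — which is precisely the hypothesis $r \ge p/2$ — yields
\[
r\int_r^{p+q} m^s\,ds \ge r\int_q^{p+r} m^s\,ds \ge (p-r)\int_q^{p+r} m^s\,ds,
\]
and the strictly positive term $p\int_p^{q+r} m^s\,ds$ on the left produces the required strict inequality. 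This gives $\partial_r h > 0$ on $[0,1[\,\times\,]0,1[$, as asserted.
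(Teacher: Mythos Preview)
Your argument is correct, and it follows a genuinely different route from the paper's. Both proofs start from the same derivative formula, but you regroup $\partial_r h$ according to the factors $(-\ln m)$ and $(-\ln y)$, whereas the paper groups by the two summands $h_1/F_I^{p+1}$ and $m^{r+1}h_2/F_{II}^{p+1}$ and performs a case analysis on the sign of $h_2$. In the hard case $h_2<0$ the paper uses the crude bounds $F_I\le(1-y^p)^{1/p}$, $F_{II}\ge m(1-y^p)^{1/p}$ to reduce to an auxiliary function $g(m,r,y)$, and then proves $g$ is strictly decreasing in $y$ through a further chain of auxiliary quantities $\ell(m,r)$, $\mu(m,r)$, $\eta(m,r)$; the hypothesis $r\ge p/2$ enters only at the very end, in showing $\partial_m\eta<0$.

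Your route is shorter and more transparent: the $(-\ln m)$ bracket is manifestly positive, and you prove the sharper pointwise bound $B/A\ge m^{q}$ with $q=(r+1)p/(p+1)$, reducing it via the elementary Bernoulli-type inequality $(y^r-y^p)/(1-y^r)\le (p-r)/r$ to an inequality in $m$ alone. The integral representation $m^a-m^b=(-\ln m)\int_a^b m^s\,ds$ together with the interval inclusion $[q,p+r]\subseteq[r,p+q]$ (from $r\le q$) and $r\ge p-r$ (the hypothesis $r\ge p/2$) then closes the argument immediately. One small point you use without stating: the ``strictly positive term $p\int_p^{q+r}m^s\,ds$'' requires $p<q+r$, but this follows at once from the two facts you have already recorded, since $p-r\le r\le q$. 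In any case strictness here is not needed for $\partial_r h>0$, which already comes from the $(-\ln m)$ bracket.
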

\begin{proof}
From the preceding observations,  we may assume $m\in]0,1[$ and $y\in]0,1[$.
Differentiating in $r$, we have, for $R=R(m,p,r)$, that
\begin{equation*}
%\label{derh}
\begin{split}
\de_{r}h = &- \frac {1}{pF_{I}^{p+1}} \big[-(1-y^{r})\de_{r} R + R\, y^{r}\log y \big]+\\[.2cm]
&- \frac {m}{pF_{I\!I}^{p+1}} \big[
-(1+m^{r}y^{r})\de_{r}R- R\,m^{r}y^{r}(\log m +\log y)
\big],
\end{split}
\end{equation*}
where
\begin{equation}
\label{firint}
F_{I}(m,p,r,y):=\displaystyle \left[1-R(1- y^{r}) - y^p\right]^\frac 1 p \le \left[1-y^p\right]^\frac 1p,
\end{equation}
and 
\begin{equation}
\label{secint}
F_{I\!I}(m,p,r,y):=\displaystyle \left[1-R(1+m^{r}y^{r})-m^{p}y^{p}\right]^\frac 1 p\ge m\left[1-y^{p}\right]^\frac 1p.
\end{equation}
Being 
\[
R=\frac{1-m^{p}}{1+m^{r}},\quad
\de_{r}R= -\frac{1-m^{p}}{(1+m^{r})^{2}}m^{r}\log m,
\]
we have that
\begin{equation*}
\begin{split}
\de_{r}h = \frac1p\frac{1-m^{p}}{(1+m^{r})^{2}}\bigg\{ &\overbrace{\bigg[ -(1-y^{r})m^{r}\log m -y^{r}(1+m^{r})\log y\bigg]}^{h_{1}(m,r,y)}\frac{1}{F_{I}^{p}}+ \\[.2cm]
+& \underbrace{\bigg[-(1+m^{r}y^{r})\log m+(1+m^{r})y^{r}(\log m +\log y)\bigg]}_{h_{2}(m,r,y)}\frac {m^{r+1}}{F_{I\!I}^{p}}\bigg\}.
\end{split}
\end{equation*}

Let us observe that $h_{1}(m,p,r,y)\ge 0$. Hence, in the set $A$ of $(m,p,r,y)$ such that $h_{2}(m,p,r,y)$ is nonnegative, we have that $\de_{r} h(m,p,r,y) \ge 0$. Moreover, $h_{1}(m,p,r,y)$ cannot vanish ($y<1$), then $\de_{r}h>0$ in $A$.

Hence, let us consider the set $B$ where
\[
h_{2}=(y^{r}-1)\log m+(1+m^{r})y^{r}\log y\le 0
\] 
(observe that in general $A$ and $B$ are nonempty). 
By \eqref{firint} and \eqref{secint} we have that
\begin{equation*}
\begin{split}
\de_{r}h \ge \frac1p\frac{1-m^{p}}{(1+m^{r})^{2}}\bigg\{ &\bigg[ -(1-y^{r})m^{r}\log m -y^{r}(1+m^{r})\log y\bigg]\frac{1}{(1-y^{p})^{\frac {p+1}{p}}}+ \\[.2cm]
+& \bigg[(y^{r}-1)\log m+(1+m^{r})y^{r}\log y\bigg]\frac {m^{r-p}}{(1-y^{p})^{\frac{p+1}{p}}}\bigg\}.
\end{split}
\end{equation*}
Hence, to show that $\de_{r}h> 0$ also in the set $B$ it is sufficient to prove that
\begin{multline}
\label{g>0}
g(p;m,r,y):=\bigg[ -(1-y^{r})m^{r}\log m -y^{r}(1+m^{r})\log y\bigg]+\\
+ \bigg[(y^{r}-1)\log m+(1+m^{r})y^{r}\log y\bigg]m^{r-p}> 0
\end{multline}
when $m\in]0,1[$, $r\in[\frac p2,p]$ and $y\in]0,1[$.\\

{\bf Claim 1.} {\em For any $r\in \left[\frac p2,p\right]$ and $m\in]0,1[$, the function $g(m,r,\cdot)$ is strictly decreasing for $y\in]0,1[$.} 
\\

To prove the Claim 1, we differentiate $g$ with respect to $y$, obtaining
\begin{multline*}
\de_{y}g= \bigg[ry^{r-1}m^{r}\log m-ry^{r-1}(1+m^{r})\log y-y^{r-1}(1+m^{r})\bigg]+\\
+\bigg[ry^{r-1}\log m+(1+m^{r})(ry^{r-1}\log y+y^{r-1})
\bigg]m^{r-p}=\\
=y^{r-1}\bigg[r (m^{r} +m^{r-p})\log m+r(1+m^{r})(m^{r-p}-1)\log y + (1+m^{r})(m^{r-p}-1)\bigg].
\end{multline*}
Then $\de_{y} g < 0$ if and only if
\begin{equation*}
%\label{ineq0}
(1+m^{r})(m^{r-p}-1)(r\log y+1) < -r(m^{r}+m^{r-p})\log m.
\end{equation*}
The above inequality is true, as we will show that (recall that $0<m<1$ and $\frac p2\le r\le p$)
\begin{equation}
\label{ineqlog}
\log y < -\frac1r +\frac{(m^{r}+m^{r-p})\log m}{(1+m^{r})(1-m^{r-p})}=:-\frac1r+\ell(m,r). 
\end{equation}
If the the right-hand side of \eqref{ineqlog} is nonnegative, then for any $y\in ]0,1[$ the inequality \eqref{ineqlog} holds.\\

{\bf Claim 2.} {\em For any $r\in \left[\frac p2,p\right]$ and $m\in]0,1[$, $\ell(m,r)> \frac1r$.} \\

We will show that
\[
\ell(m,r) >  \frac 1r.
\]
We have
\[
\ell(m,r)= \frac{(m^{r}+m^{r-p})\log m }{(1+m^{r})(1-m^{r-p})}> \frac 1r
\]
if and only if
\begin{multline*}
\mu(m,r)=(m^{r}+m^{r-p})\log\frac{1}{m} - \frac1r(1+m^{r})(m^{r-p}-1)=\\
=(m^{r}+m^{r-p})\log\frac{1}{m} +\frac1r ( 1+m^{r}-m^{r-p}-m^{2r-p})=\\
= m^{r}\left(\log\frac 1m+\frac1r \right)+ m^{r-p}\left(\log\frac1m-\frac1r \right)+\frac1r (1-m^{2r-p})
 > 0.
\end{multline*}
Then for $m \in]0,1[$ we have
\begin{multline*}
\mu(m,r)=m^{r}\left(\log\frac 1m+\frac1r \right)+ m^{r-p}\left(\log\frac1m-\frac1r\right)+\frac1r (1-m^{2r-p})\\
\ge m^{r}\left(\log\frac 1m+\frac1r\right)+ m^{r-p}\left(\log\frac1m-\frac1r\right)=\\=
m^{r-p}\left(m^{p}\left(\log\frac 1m+\frac1r\right)+\log\frac 1m-\frac1r\right):=m^{r-p}\eta (m,r)> 0.
\end{multline*}
We prove that $\mu (m,r)$ is positive by showing that $\eta (m,r)$ is decreasing in $m$:
\[
\partial_m \eta (m,q)=m^{p-1}\left(\log \frac 1{m^p} - \frac1{m^p}+\frac pr -1\right).
\]
Since $\log \frac 1{m^p} < \frac1{m^p}-1$, we have that $\partial_m \eta (m,q)<0$ when $r\geq \frac p2$ and the Claim 2, and then the Claim 1, are proved. To conclude the proof of \eqref{g>0}, it is sufficient to observe that 
\[
g(m,r,y)> g(m,r,1)=0
\]
when $m\in]0,1[$, $r\in[\frac p2,p]$ and $y\in]0,1[$.

The Claim 1 gives that $\de_{r}h(m,r,y)> 0$ when $m\in]0,1[$, $r\in\left[\frac p2,p\right]$ and $y\in]0,1[$, and this conclude the proof.
%Finally, we show that if $m<1$ then $\de_{r}h(r,m,y)>0$ for $m\in ]0,1[$ and $y\in]0,1[$. This follows from the third claim below.
%
%{\bf Claim 3.} {\em If $0<m<1$ and $1<r\le 2$, then $g(r,m,\cdot$) is strictly decreasing for $y\in]0,1[$}.
%
%Taking a closer look to the inequality \eqref{ineqlog},
%
% we have that if $m<1$ the inequality \eqref{ineq0} is strict.
\end{proof}
Now, to prove the monotonicity of $H$ in $m$, we argue similarly as in \cite{GGR}. We show that, for any fixed $p\geq 2$ the function $K(m):=H\left(m,p,\frac p2\right)$ is constant.
\begin{lem}
\label{Hdecr} Let $p\geq2$, then $K'(m)= 0$, $\forall\ m\in]0,1[$.
\end{lem}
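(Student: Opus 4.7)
The plan is to reduce $K(m)$ to a single-variable integral for which a well-chosen substitution makes the $m$-dependence vanish by a parity argument. First I would use the factorization $1-m^p=(1-m^{p/2})(1+m^{p/2})$, which gives $R(m,p,p/2)=1-m^{p/2}$. Setting $s:=|y|^{p/2-1}y$ (so $s^2=|y|^p$) and $M:=m^{p/2}$, the radicand becomes a quadratic in $s$:
\begin{equation*}
1-R(1-|y|^{p/2-1}y)-|y|^p=M+(1-M)s-s^2=(1-s)(s+M).
\end{equation*}
The substitution $y\mapsto s$ gives $dy=\frac{2}{p}|s|^{2/p-1}ds$; then the affine change $s=(1+M)t-M$ sends $[-M,1]\to[0,1]$, cancels all $(1+M)$-powers, and yields
\begin{equation*}
K(m)=\frac{2}{p}\int_0^1\frac{|t-t_0|^{2/p-1}}{[t(1-t)]^{1/p}}\,dt,\qquad t_0:=\frac{M}{1+M}=\frac{m^{p/2}}{1+m^{p/2}}\in(0,1).
\end{equation*}

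The central step is to show that the last integral does not depend on $t_0$. To this end I would introduce
\begin{equation*}
\xi:=\frac{t-t_0}{\sqrt{t(1-t)}},
\end{equation*}
whose derivative $d\xi/dt=(t+t_0-2tt_0)/[2(t(1-t))^{3/2}]$ is positive on $(0,1)$ (the numerator is linear in $t$ with positive values at both endpoints), so $\xi$ is a diffeomorphism $(0,1)\to\R$ with $\xi(t_0)=0$. The quadratic $(1+\xi^2)t^2-(2t_0+\xi^2)t+t_0^2=0$ encoding the inverse lets one compute, with $\Delta:=\sqrt{\xi^2+4t_0(1-t_0)}$,
\begin{equation*}
\frac{2t(1-t)}{t+t_0-2tt_0}=\frac{\Delta+\xi(1-2t_0)}{\Delta(1+\xi^2)}.
\end{equation*}
Combining with $|t-t_0|^{2/p-1}=|\xi|^{2/p-1}[t(1-t)]^{(2/p-1)/2}$ and checking that the resulting exponent of $t(1-t)$ equals $1$ produces
\begin{equation*}
K(m)=\frac{2}{p}\int_{-\infty}^{+\infty}\frac{|\xi|^{2/p-1}}{1+\xi^2}\,d\xi+\frac{2(1-2t_0)}{p}\int_{-\infty}^{+\infty}\frac{|\xi|^{2/p-1}\,\xi}{\Delta(1+\xi^2)}\,d\xi.
\end{equation*}

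Since $|\xi|^{2/p-1}$, $\Delta$ and $1+\xi^2$ are all even functions of $\xi$, the integrand of the second integral is odd and that integral vanishes; the first integral is manifestly independent of $t_0$ (and a standard computation gives its value $\pi/\sin(\pi/p)$, consistent with $\lambda_\alpha(p,p/2)=(p-1)K^p=\pi_p^p$). Hence $K$ is constant on $]0,1[$ and $K'(m)=0$. The main obstacle is guessing the change of variables $\xi=(t-t_0)/\sqrt{t(1-t)}$; once it is in place, the algebraic identity for $2t(1-t)/(t+t_0-2tt_0)$ and the splitting into an even ($t_0$-free) part and an odd ($t_0$-dependent but vanishing) part follow from the quadratic equation for $t$.
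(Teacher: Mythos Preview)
Your argument is correct, and it takes a genuinely different route from the paper's own proof. The paper differentiates $K(m)$ under the integral sign (after splitting $H$ into its two pieces $A^{-1/p}$ and $mB^{-1/p}$), reduces $K'(m)=0$ to the identity
\[
\int_0^1 \frac{1-y^{p/2}}{A(m,y)^{(p+1)/p}}\,dy=\int_0^1 \frac{m(1-y^{p/2})}{B(m,y)^{(p+1)/p}}\,dy,
\]
and then proves this by the change of variables $h(y)=my/[1-(1-m^{p/2})y^{p/2}]^{2/p}$, following Ghisi--Gobbino--Rovellini. You never differentiate: instead you rewrite $K(m)$ itself as a single integral $\frac{2}{p}\int_0^1 |t-t_0|^{2/p-1}[t(1-t)]^{-1/p}\,dt$ with $t_0=m^{p/2}/(1+m^{p/2})$, and then the M\"obius-type substitution $\xi=(t-t_0)/\sqrt{t(1-t)}$ splits the integrand into a $t_0$-free even part and a $t_0$-dependent odd part that integrates to zero (both pieces being absolutely convergent since $2/p-3<-1$ at infinity and $2/p-1>-1$ at the origin). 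Your approach has the bonus of actually \emph{evaluating} $K(m)=\frac{2}{p}\int_{\R}\frac{|\xi|^{2/p-1}}{1+\xi^2}\,d\xi=\frac{2\pi}{p\sin(\pi/p)}=\pi_p/(p-1)^{1/p}$, which is precisely the value used downstream in Lemma~\ref{propminimo}; the paper's approach only yields $K'\equiv 0$ and then reads off the constant from the boundary value $K(1)$.
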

\begin{proof} For any fixed $p\geq2$, we denote the following non negative function by:
\begin{align*}
&A(m,y):= m^\frac p 2 +(1-m^\frac p2)y^{\frac p2}-y^p,\quad\forall\ {(m,y)\in[0,1]^2};\\
&B(m,y):= m^\frac p 2-(1-m^\frac p2)m^\frac p 2 y^\frac p2-m^p y^p,\quad\forall \ {(m,y)\in[0,1]^2}.
\end{align*}
Moreover, in this case
\[
R\left(m,p,\frac p2\right)=1-m^\frac p2,\quad\forall \ {m\in[0,1]}.
\]
Hence $K(m)=\ds\int_0^1 \left(A(m,y)^{-\frac 1p}+mB(m,y)^{-\frac 1p}\right)dy$ and 
\begin{multline*}
K'(m)=-\frac 1 p\int_0^1 \left(A(m,y)^{-\frac 1p-1}\frac{\partial A(m,y)}{\partial m}\right.+\\
\left. B(m,y)^{-\frac 1p-1}\left(-p B(m,y)+m\frac{\partial B(m,y)}{\partial m}\right)\right)dy.
\end{multline*}
Differentiating with respect to $m$, we obtain
\begin{align*}
&\frac{\partial A(m,y)}{\partial m}=\frac p 2 m^{\frac p2 -1}(1-y^{\frac p2}),\\
&-pB(m,y)+m\frac{\partial B(m,y)}{\partial m}=-\frac p 2 m^{\frac p2}(1-y^{\frac p2}).
\end{align*}
Hence
\[
K'(m)=\frac{m^\frac p2-1}{2}\int_0^1 \left(-\frac{1-y^{\frac p2}}{A(m,y)^{\frac 1p+1}} +\frac{m(1-y^{\frac p2})}{B(m,y)^{\frac 1 p +1}}\right)dy.
\]
Now we study the sign of the right integral. We want to prove that
\begin{equation}
\label{Gobbino_int}
\int_0^1 \frac{1-y^{\frac p2}}{A(m,y)^\frac{p+1}{p}}\ dy = \int_0^1 \frac{m (1-y^{\frac p2})}{B(m,y)^\frac{p +1}{p}} \ dy
\end{equation}
Following the ideas of \cite{GGR}, for all $m\in (0,1)$, we set
\begin{equation*}
\label{delta}
\delta (y):=[1-(1-m^{\frac p2})y^{\frac p2}]^\frac{2}{p}\quad\forall\in [0,1]
\end{equation*}
and
\begin{equation}
\label{changingvariables}
h(y):=\frac{my}{\delta (y)}\quad\forall\ y\in[0,1].
\end{equation}
It holds that $h(0)=0$, $h(1)=1$ and
\[
h'(y):=\frac{m}{\delta (y)^{\frac p2+1}}, \ \forall\ y \in (0,1).
\]
Hence the function $h$ is strictly increasing and, keeping \eqref{changingvariables} into account, the result follows if we prove that
\begin{multline*}
\int_{0}^1{ \frac{ 1-m^{\frac p2}y^{\frac p2}\delta (y)^{-\frac p2}}{\displaystyle \left(m^\frac p2+(1-m^\frac p2)m^{\frac p2}y^{\frac p2}{\delta (y)^{-\frac p2}}- \displaystyle m^{p}y^p{\delta (y)^{-p}}\right)^\frac{p+1}{p}}\cdot\frac{1}{\displaystyle \delta(y)^{\frac p2+2}} \ dy}\\
= \int_{0}^1\frac{ 1-y^{\frac p2}}{\displaystyle \left(m^\frac p2-(1-m^\frac p2)m^{\frac p2}y^{\frac p2}-m^py^p\right)^\frac{p +1}{p}} \ dy
\end{multline*}
Therefore \eqref{Gobbino_int} is proved if we show that
\begin{multline*}
%\label{equivalently1}
\displaystyle  m^\frac p2-(1-m^\frac p2)m^{\frac p2}y^{\frac p2}-m^py^p   = \delta(y)^{p} \left(m^\frac p2+(1-m^\frac p2)\frac{m^{\frac p2}y^{\frac p2}}{\delta (y)^{\frac p2}}- \frac{\displaystyle m^{p}y^p}{\delta (y)^p}\right) ,
\end{multline*}
and this is an equality that can be easily checked.% and hence the result follows. %Therefore the integrand function is nonnegative and $H(\cdot,p,\frac p2)$ is decreasing with respect to $m$.  
\end{proof}

Now, we are in position to state the main property of the function $H(m,p,r)$.
% $H(m,p,p)=\ds\frac 1 2 \frac{\pi_p}{(p-1)^\frac 1p} \left(\frac{1+m^p}{2}\right)^\frac1p \left(\frac 1m +1\right)\ge \frac{\pi_p}{(p-1)^\frac 1p}=H(1,p,p)$,
\begin{lem}
\label{propminimo} Let $p\geq2$ and $\frac p2\le r\le p$, then for all $m\in[0,1]$  it holds that
\begin{equation*}
%\label{maggiorazione}
H(m,p,r) \ge\frac{\pi_p}{(p-1)^\frac 1p}.
\end{equation*}
Moreover:
\begin{itemize}
\item when $\frac p2 < r\le p$, then $H(m,p,r)=\frac{\pi_p}{(p-1)^\frac 1p}$ if and only if $m=1$; 
\item $H\left(m,p,\frac p2\right)=\frac{\pi_p}{(p-1)^\frac 1p}$ for all $m\in[0,1]$.
\end{itemize}
\end{lem}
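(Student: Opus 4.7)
The plan is to combine the two monotonicity lemmas just established with an explicit evaluation of $H$ at the endpoint $m=1$; the rest is bookkeeping.

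First I would compute $H(1,p,r)$ directly. Since $R(1,p,r)=(1-1)/(1+1)=0$ for every $r$, the definition of $H$ collapses to
\[
H(1,p,r)=\int_{-1}^{1}\frac{dy}{(1-|y|^{p})^{1/p}}=2\int_{0}^{1}\frac{dy}{(1-y^{p})^{1/p}}=\frac{\pi_{p}}{(p-1)^{1/p}},
\]
by the formula for $\pi_{p}$ recalled in Section~2.1. Thus the claimed lower bound is attained at $m=1$ for every admissible $r$.

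For the borderline exponent $r=\tfrac{p}{2}$, Lemma~\ref{Hdecr} tells me that $K(m):=H(m,p,\tfrac{p}{2})$ has zero derivative on $(0,1)$, so it is constant there; dominated convergence in the defining integral extends the constant to $[0,1]$, and Step~1 identifies its value as $K(1)=\pi_{p}/(p-1)^{1/p}$. This settles the second bulleted assertion. For $r>\tfrac{p}{2}$, I would invoke Lemma~\ref{prop-monotonia-r}: since $\partial_{r}h(m,p,r,y)>0$ pointwise for $m\in(0,1)$ and $y\in[0,1)$, integrating in $y$ preserves the strict inequality and yields
\[
H(m,p,r)>H\bigl(m,p,\tfrac{p}{2}\bigr)=\frac{\pi_{p}}{(p-1)^{1/p}},\qquad m\in(0,1),\ r\in\bigl(\tfrac{p}{2},p\bigr].
\]
Together with the equality at $m=1$ from Step~1, this gives the main inequality on $(0,1]\times(\tfrac{p}{2},p]$ with equality exactly at $m=1$. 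The boundary case $m=0$ is recovered by letting $m\downarrow 0$ and using continuity of $H(\cdot,p,r)$ — or by direct inspection, noting that $H(0,p,p)=+\infty$, while for $r\in[\tfrac{p}{2},p)$ the singularity at $y=0$ of the defining integrand is integrable and the resulting finite value inherits the same lower bound.

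The only genuinely technical point is justifying the limit passages at $m\in\{0,1\}$, which is a routine dominated-convergence check once one observes that the integrand is uniformly controlled near the integrable singularities at $|y|=1$. I do not expect a serious obstacle: once Lemmas~\ref{prop-monotonia-r} and~\ref{Hdecr} are in hand, the present statement is essentially a corollary obtained by reading off the endpoint value $H(1,p,r)$.
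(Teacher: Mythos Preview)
Your approach is essentially identical to the paper's: compute $H(1,p,r)$ explicitly, use Lemma~\ref{Hdecr} to pin down the value along $r=\tfrac{p}{2}$, then invoke Lemma~\ref{prop-monotonia-r} for the strict inequality when $m\in(0,1)$ and $r>\tfrac{p}{2}$.

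The one point where you are looser than the paper is $m=0$ in the equality characterization. Passing to the limit $m\downarrow0$ via continuity yields only $H(0,p,r)\ge\pi_p/(p-1)^{1/p}$, which is not enough for the ``if and only if $m=1$'' clause when $\tfrac{p}{2}<r<p$; you need \emph{strict} inequality there. The paper closes this by the direct computation you allude to but do not carry out: since $R(0,p,r)=1$, one has
\[
H(0,p,r)=\int_0^1\frac{dy}{[y^r-y^p]^{1/p}}>\int_0^1\frac{dy}{[y^{p/2}-y^p]^{1/p}}=H\bigl(0,p,\tfrac{p}{2}\bigr)=\frac{\pi_p}{(p-1)^{1/p}},
\]
the strict inequality coming from $y^r<y^{p/2}$ for $y\in(0,1)$ and $r>\tfrac{p}{2}$. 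With this one line added, your argument matches the paper's.
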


\begin{proof}%[Proof of Proposition \ref{propminimo}]
If $m=1$, we have that
\[
H(1,p,r)=2\int_0^1\frac{dy}{[1-y^p]^\frac1p}=\frac{\pi_p}{(p-1)^\frac 1p}
\]
for  $1\le\frac p2\le r\le p$. Moreover, by Lemma \ref{Hdecr}
\[
H\left(m,p,\frac p2\right)= H\left(1,p,\frac p2\right)=\frac{\pi_p}{(p-1)^\frac 1 p}.
\]
for any $m\in[0,1]$.

To study all the other cases, we first consider $0<m<1$. Then for any $p\ge 2$, $\frac p2 < r\le p$, by Lemma  \ref{prop-monotonia-r}  we get
we have
\[
H(m,p,r)> H\left(m,p,\frac p2\right) = \frac{\pi_p}{(p-1)^\frac 1 p}.
\]
When $m=0$, simple calculations give
\[
H(0,p,r)=\int_0^1\frac{dy}{[y^r-y^p]^\frac1p}>\int_0^1\frac{dy}{[y^{\frac p2}-y^p]^\frac1p}=H\left(0,p,\frac p2\right)= \frac{\pi_p}{(p-1)^\frac 1p}
\]
and hence the result.
\end{proof}
\begin{prop}\label{cambiosegno} Let $p\geq2$, $\frac p2\le r \le p$ and suppose that there exists $\alpha>0$ such that $\lambda_\alpha(p,r)$ admits a minimizer $y$ that changes sign in $[-1,1]$.
\item[(i)] If $\frac p 2\le r \le p$, then
\[
\lambda_\alpha(p,r)=\Lambda(p,r)=\pi_p^{p}.
\] 
\item[(ii)] If $\frac p 2< r \le p$, then
\begin{equation}
\label{r-average}
\int_{-1}^{1}|y|^{r-1}y\,dx=0.
\end{equation}
\item[(iii)] If $\frac p 2\le r \le p$ and \eqref{r-average} holds, then {$y(x)=C\sin_p (\pi_p x)$, with $C\in \R\setminus\{0\}$}. Hence the only point in $]-1,1[$ where $y$ vanishes is $\overline x=0$. 
\end{prop}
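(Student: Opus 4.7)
The plan is to combine the representation formula $\lambda_\alpha(p,r)=(p-1)H(\bar m,p,r)^p$ of Proposition \ref{cambiosegno0}(d) with the sharp lower bound on $H$ from Lemma \ref{propminimo}, and then invoke the monotonicity-plus-limit information on $\lambda_\alpha$ recorded in Proposition \ref{propr}(c)-(e).

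For part (i), Proposition \ref{cambiosegno0}(d) together with Lemma \ref{propminimo} immediately yields the lower bound $\lambda_\alpha(p,r)\ge\pi_p^p$. For the reverse inequality, Proposition \ref{propr}(c) gives that $\lambda_\alpha$ is non-decreasing in $\alpha$ and Proposition \ref{propr}(e) gives $\lim_{\alpha\to+\infty}\lambda_\alpha=\pi_p^p$, whence $\lambda_\alpha(p,r)\le\pi_p^p$ for every $\alpha$. Equality follows.

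For part (ii), the second bullet of Lemma \ref{propminimo} states that, when $\frac{p}{2}<r\le p$, the lower bound is strict unless $m=1$. Combined with (i), this forces $\bar m=1$, so $R(\bar m,p,r)=0$ and the first integral \eqref{integratedELconstant} collapses to $|y'|^p=\tfrac{\lambda_\alpha}{p-1}(1-|y|^p)$. Consequently $y^+$ and $|y^-|$ solve the same autonomous ODE, attain the common maximum value $1$, and, by Proposition \ref{cambiosegno0}(b)-(c), each is the symmetric single-hump solution of that ODE about its extremum point. They therefore coincide up to translation, and in particular $\int_{-1}^{1}(y^+)^r\,dx=\int_{-1}^{1}|y^-|^r\,dx$, which is exactly \eqref{r-average}.

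For part (iii), the first step is to show that \eqref{r-average} forces $\gamma=0$ in the Euler-Lagrange equation \eqref{el}. When $r<p$ this comes from observing that $|I(u)|^{p/r}$ is differentiable at $u$ with vanishing derivative whenever $I(u):=\int|u|^{r-1}u\,dx=0$, since $p/r>1$; when $r=p$ it is the content of the argument from \cite{DGS} already invoked in Proposition \ref{propr}(b). With $\gamma=0$ and $\lambda_\alpha=\pi_p^p$ from (i), equation \eqref{el} reduces to the standard one-dimensional $p$-Laplacian eigenvalue problem at the second eigenvalue $\pi_p^p$. Since by Proposition \ref{cambiosegno0}(c) the function $y$ has a unique interior zero, and by (a)-(b) its positive and negative parts are symmetric about the extremum points, this pins down $y$ (up to a multiplicative constant) as $C\sin_p(\pi_p x)$, whose only zero in $]-1,1[$ is $\bar x=0$. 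The main delicate point I anticipate is handling $\gamma=0$ uniformly across $\tfrac p2\le r\le p$, since $t\mapsto|t|^{p/r}$ is not smooth at $t=0$ when $r<p$; the key observation is that $|I(u)|^{p/r}=o(|I(u)|)$ as $I(u)\to0$, which is enough to annihilate the nonlocal contribution in the first variation.
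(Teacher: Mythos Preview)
Your argument is correct and follows the same overall architecture as the paper: Proposition \ref{cambiosegno0}(d) plus Lemma \ref{propminimo} for the lower bound, Proposition \ref{propr}(c)--(e) for the upper bound, and then the equality case of Lemma \ref{propminimo} to force $\bar m=1$.

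The one place you work harder than necessary is part (ii). Once $\bar m=1$ is established, the paper simply reads off the conclusion from the first line of \eqref{costnl}: since $\gamma=\dfrac{r\lambda}{p\alpha}R(\bar m,p,r)$ and $R(1,p,r)=0$, one gets $\gamma=0$, which (by the very definition of $\gamma$ in Proposition \ref{propr}(b)) is equivalent to $\int_{-1}^{1}|y|^{r-1}y\,dx=0$. Your detour through the collapsed first integral, the autonomous ODE, and the translation-coincidence of $y^+$ and $|y^-|$ is valid but superfluous; the algebraic identity in \eqref{costnl} already encodes exactly this information. For part (iii) your treatment of the $\gamma=0$ issue is slightly more explicit than the paper's (which just writes down the resulting boundary value problem), but the content is identical.
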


\begin{proof}%[Proof of Proposition \ref{cambiosegno}]
Let us consider a minimizer $y$ of $\lambda_\alpha(p,r)$ in $[-1,1]$ that changes sign, with $\max y=1$ and $\bar m=-\min y$.

By  \textit{(d)} of Proposition \ref{cambiosegno0} and Lemma \ref{propminimo}, the eigenvalue $\lambda_\alpha(p,r)$ has to satisfy the inequality
\[
\lambda_\alpha(p,r)\ge \pi_p^{p}.
\]
Hence, by \textit{(c)} and \textit{(e)} of Proposition \ref{propr}, it follows that
\[
\lambda_\alpha(p,r) = \pi_p^{p},
\]
that gives $\textit{(i)}$.

Now assume that $\frac p 2<r\le p$. Again by Lemma \ref{propminimo} and \textit{(d)} of Proposition \ref{cambiosegno0}, $\lambda_\alpha(p,r)=\pi_p^{p}$ if and only if $\bar m=1$. Hence, the first identity of \eqref{costnl} gives that 
\[
\int_{-1}^{1} |y|^{r-1}ydx=0,
\]
and \textit{(ii)} follows. To prove \textit{(iii)}, let us explicitly observe that, when \eqref{r-average} holds, $y$ solves
\[
\begin{cases}
(|y'|^{p-2}y')'+\pi^{p}_p|y|^{p-2}y=0&\text{in }]-1,1[\\
y(-1)=y(1)=0.&\\
\end{cases}
\]
Hence {$y(x)=C\sin_p (\pi_p x)$, with $C\in \R\setminus\{0\}$}.
%  
%  \eqref{inel1d} with $\gamma=0$ becomes
%\[
%\frac{y'^2}{2}+\lambda\frac{y'^2}{2}=\frac{\lambda}{2}\quad\text{and hence}\quad \frac{|y'|}{\sqrt{1-y^2}}=\sqrt{\lambda}.
%\]
%By integration, supposing without loss of generality that $y>0$ in $]-1,\bar x[$, we have
%\[
%\int_{\frac{\bar x+1}{2}}^{\bar x} \frac{-y'}{\sqrt{1-y^2}}dx=\sqrt{\lambda}\left(\bar x-\frac{\bar x +1}{2}\right)\quad\text{and}\quad\int^{\frac{1-\bar x}{2}}_{\bar x} \frac{-y'}{\sqrt{1-y^2}}dx=\sqrt{\lambda}\left(\frac{1-\bar x}{2}-\bar x\right),
%\]
%moreover, by a change of variables,
%\[
%\int_0^1 \frac{dy}{\sqrt{1-y^2}}=\sqrt{\lambda}\left(\bar x-\frac{\bar x +1}{2}\right)\quad\text{and}\quad\int_{-1}^0 \frac{dy}{\sqrt{1-y^2}}=\sqrt{\lambda}\left(\frac{1-\bar x}{2}-\bar x\right).
%\]
%Hence $\bar x=0$ is the only point where $y$ vanishes and the minimizer $y$ is odd}. Therefore the Proposition is completely proved.
\end{proof}
%Now we are in position to prove that if $\alpha$ is sufficiently large, there exists $\alpha$ for which $\lambda(\alpha,q)$ admits a changing-sign solution.

\section{Proof of the main results}
In this Section we prove the main results by using the properties of Section \ref{section_prop_min}.
\begin{proof}[Proof of Theorem \ref{mainthm1}]
We prove that there exists a critical value of the parameter such that the minimizer is symmetric. Firstly we prove the following claim.\\

\textbf{Claim.}
\textit{There exists a positive value of $\alpha$ such that the minimum problem
\begin{equation*}
\lambda_\alpha(p,r)=\min_{u \in W_0^{1,p}([-1,1])}\frac{\displaystyle \int_{-1}^{1} | u'|^p\  dx+\alpha \left|\int_{-1}^{1} u|u|^{r-1}\  dx\right|^{\frac{p}{r}}}{\ds\int_{-1}^{1} u^p\  dx}
\end{equation*}
admits an eigenfunction $y$ that satisfies $\int_{-1}^1y|y|^{r-1}\  dx=0$. In such a case, $\lambda_\alpha(p,r)=\pi_p^{p}$ and, up to a multiplicative constant, $y=\sin_p (\pi_p x)$.}

By Proposition \ref{propr} \textit{(e)}, if a minimizer $y$ changes sign, then we may suppose that $\alpha>0$. By contradiction, we suppose that for any $k\in\N$, there exists a divergent sequence $\alpha_{k}$, and a corresponding sequence of eigenfunctions $\{y_{k}\}_{k\in\N}$ relative to $\lambda_{\alpha_{k}}(p,r)$ such that $\int_{-1}^1y_{k}|y_{k}|^{r-1}dx>0$ and $\|y_k\|_{L^p(-1,1)}=1$. By Proposition \ref{cambiosegno}, these eigenfunctions do not change sign and, as we have already observed, $\lambda_{\alpha_{k}}(p,r)\le\pi_p^{p}$. Hence, it holds that
\begin{equation}
\label{contrad}
\displaystyle \int_{-1}^{1} | y_k'|^p\  dx+\alpha_k \left(\int_{-1}^{1} |y_k|^{r}\  dx\right)^{\frac{p}{r}}\le\pi_p^{p}.
\end{equation}
Therefore, $y_k$ converges (up to a subsequence) to a function $y\in W_0^{1,p}(-1,1)$, strongly in $L^p(-1,1)$ and weakly in $W_0^{1,p}(-1,1)$. Moreover $\|y\|_{L^p(-1,1)}=1$ and $y$ is not identically zero. Therefore $\|y\|_{L^r(-1,1)}>0$ and, letting $\alpha_k\rightarrow +\infty$ in \eqref{contrad} we have a contradiction and the claim is proved.

Now, let us recall that, by Proposition \ref{propr}, $\lambda_\alpha(p,r)$ is a nondecreasing Lipschitz function in $\alpha$.  Therefore we can define 
\[
\alpha_{C}=\min\{\alpha\in\R\colon \lambda_\alpha(p,r)=\pi^p_{p}\}=\sup\{\alpha\in\R\colon \lambda_\alpha(p,r)<\pi_p^{p}\}.
\]
We easily verify that this value of the parameter is positive and if $\alpha<\alpha_{C}$, then the minimizers corresponding to $\lambda_\alpha(p,r)$ have constant sign, otherwise $\lambda_\alpha(p,r)=\pi_p^{p}$. When $\alpha>\alpha_{C}$, then any minimizer $y$ corresponding to $\alpha$ is such that $\int_{-1}^{1}|y|^{r-1}y\,dx=0$. Indeed, if we assume, by contradiction, that there exist $\bar \alpha>\alpha_{C}$ and $\bar y$ such that $\int_{-1}^{1}|\bar y|^{r-1}\bar y\,dx>0$, $\|y\|_{L^p}=1$ and $\mathcal Q_{\bar\alpha}[\bar y]=\lambda_{\bar\alpha}(p,r)$, then 
\begin{align*}
\mathcal Q_{\bar\alpha-\eps}[\bar y] &=
\mathcal Q_{\bar\alpha}[\bar y]-\eps\left(\int_{-1}^{1}|\bar y|^{r-1}\bar y\,dx\right)^{\frac pr }\\ &=\lambda_{\bar\alpha}(p,r)-\eps\left(\int_{-1}^{1}|\bar y|^{r-1}\bar y\,dx\right)^{\frac pr}<\lambda_{\bar\alpha}(p,r).
\end{align*}
Hence, for $\eps$ sufficiently small, $\pi_p^{p}=\lambda_{\alpha_{r}}(p,r)\le\lambda_{\bar\alpha-\eps}(p,r)<\lambda_{\bar\alpha}(p,r)$ and this is absurd.
Finally, by \textit{(iii)} of Proposition \ref{cambiosegno}, the proof of Theorem \ref{mainthm1} is completed. 
\end{proof}

\begin{proof}[Proof of Theorem \ref{mainthm2}]
It is not difficult to see, by means of approximating sequences, that $\lambda_{\alpha_{r}}(p,r)$ admits both a nonnegative minimizer and a minimizer with vanishing $r$-average. To conclude the proof of Theorem \ref{mainthm2}, we have to study the behavior of the solutions when $r=p$. When $\alpha=\alpha_C(p,p)$, the corresponding positive minimizer $y$ is a solution of
\begin{equation*}
\left\{
\begin{array}{ll}
(|y'|^{p-2}y')'+\pi_p^p y^{p-1}=\alpha_C(p,p) y^{p-1} &\text{in }]-1,1[\\
y(-1)=y(1)=0.
\end{array}
\right.
\end{equation*}
The positivity of the eigenfunction guarantees that 
\begin{equation*}
\pi_p^p-\alpha_C(p,p)=\lambda_{0}(p,p)=\left(\frac{\pi_p}{2}\right)^p,
\end{equation*}
hence $\alpha_{C}(p,p)=\frac{2^p-1}{2^p}\pi_p^p$.
\end{proof}
\begin{rem}
When $\frac p2\le r<p$, we obtain the following lower bound on $\alpha_C(p,r)$:
	\begin{equation}
	\label{stimar}
	\alpha_{C}(p,r)\ge \frac{2^p-1}{2^{\frac pr+p-1}}\pi^{p}_p.
	\end{equation}
	To get the estimate \eqref{stimar}, we use the monotonicity of $\lambda_{\alpha}(p,r)$ with respect to $\alpha$, and consider the test function $u(x)=\cos_p(\frac{\pi_p}{2} x)$. Hence
	\begin{equation*}
	\pi^{p}_p=\lambda_{\alpha_{C}}(p,r)\le\mathcal{Q}[u,\alpha_{C}]=\left(\frac{\pi_p}{2}\right)^p+\alpha_{C}\left(\int_{-1}^{1}u^{r}dx\right)^{p/r}\le \frac{\pi_p^{p}}{2^p} +\alpha_{C}2^{\frac pr-1}.
	\end{equation*}
\end{rem}
%If $v$ is a positive eigenfunction of \eqref{operat} with $\alpha=\alpha_{q}$, with $\|v\|_{2}=1$, then
%\[
%\pi^{2}=\mathcal{Q}[v,\alpha_{q}]\Rightarrow \alpha_{q}= \frac{\pi^{2}-\int_{-1}^{1}(v')^{2}dx}{\left(\int_{-1}^{1}v^{q}dx\right)^{2/q}}
%\]

\subsection*{Acknowledgement} 
The authors want to thank Professor Bernd Kawohl for his suggestions during the stay of the second author in K\"oln. \\
This work has been partially supported by GNAMPA of INdAM.

\end{document}